\documentclass[a4paper,12pt]{amsart}
\usepackage{amsthm}
\usepackage{amsmath}
\usepackage{amsfonts, mathrsfs, mathabx}
\usepackage{amssymb, amscd, wasysym}
\usepackage{enumerate, url, hyperref}
\usepackage[all]{xy}
\title{Cohomology jump loci in the moduli spaces of vector bundles}
\begin{document}
\newtheorem{thm}{Theorem}[section]
\newtheorem{corollary}[thm]{Corollary}
\newtheorem{lemma}[thm]{Lemma}
\newtheorem{prop}[thm]{Proposition}
\newtheorem{convention}[thm]{Convention}
\newtheorem{remark}[thm]{Remark}
\newtheorem{notation}[thm]{Notation}
\author{Botong Wang}
\address{Department of Mathematics, University of Notre Dame, 255 Hurley Hall, IN 46556, USA}
\email{bwang3@nd.edu}
\date{}
\maketitle
\begin{abstract}
Two decades ago, as part of their work of generic vanishing theorems, Green-Lazarsfeld showed that over a compact K\"ahler manifold $X$, the cohomology jump loci in the $Pic^\tau(X)$ are all translates of subtori. In this paper, we generalize this result to higher dimensional vector bundles. It is showed by Nadel that locally the moduli space of vector bundles with vanishing chern classes is canonically isomorphic to a quadratic cone in the Zariski tangent space of a point. We prove that under the isomorphism, the cohomology jump loci are defined by linear equations.
\end{abstract}

\section{Introduction}
Let $X$ be a compact K\"ahler manifold. Green and Lazarsfeld studied the analytic subvarieties $\Sigma^i_k(X)\stackrel{\textrm{def}}{=}\{L\in Pic^\tau(X)\,| \, \dim H^i(X, L)\geq k\}$. They have showed the following two theorems.

\begin{thm}[Green-Lazarsfeld]
The following inequality holds for every $0\leq i\leq \dim(X)$
$$\textup{codim}(\Sigma^i_1(X), Pic^\tau(X))\geq \dim a(X)-i$$
where $a(X)$ is the image of $X$ under the Albanese map. 
\end{thm}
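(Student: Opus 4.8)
The plan is to bound the codimension of $\Sigma^i_1(X)$ through an infinitesimal study of how $h^i$ jumps along one-parameter deformations of $L$, and then to convert the resulting linear-algebra condition into a geometric statement about holomorphic $1$-forms via the Albanese map. Throughout, fix $L \in \Sigma^i_1(X)$ and identify the Zariski tangent space $T_L Pic^\tau(X) = T_L Pic^0(X)$ with $V := H^1(X, \mathcal{O}_X)$, which is independent of $L$ by translation. For each $v \in V$, cup product with $v$ (using $\mathcal{O}_X \otimes L \to L$) gives maps $\cup v \colon H^j(X,L) \to H^{j+1}(X,L)$; since $v \cup v = 0$ in $H^2(X,\mathcal{O}_X)$, these assemble into the derivative complex $(H^\bullet(X,L), \cup v)$.

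First I would establish the deformation-theoretic reduction: the tangent cone $TC_L \Sigma^i_1(X)$ is contained in the non-exactness locus
$$ \Big\{\, v \in V \;:\; \frac{\ker\big(\cup v \colon H^i(X,L)\to H^{i+1}(X,L)\big)}{\operatorname{im}\big(\cup v \colon H^{i-1}(X,L)\to H^i(X,L)\big)} \neq 0 \,\Big\}. $$
The mechanism is standard obstruction theory: if $v$ is tangent to $\Sigma^i_1(X)$, then a nonzero class in $H^i$ must persist along the deformation of $L$ in the direction $v$, and the first- and second-order obstructions to persistence are governed precisely by the cohomology of the derivative complex at the spot $i$. Granting this, and since the codimension of $\Sigma^i_1(X)$ at $L$ equals that of its tangent cone, which is in turn at least the codimension of any subvariety containing it, it suffices to prove that the non-exactness locus above has codimension at least $\dim a(X) - i$ in $V$.

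The heart of the argument is to identify the derivative complex with a Koszul complex and to read off its exactness from the Albanese geometry. Since the Albanese map $f \colon X \to A = \mathrm{Alb}(X)$ induces an isomorphism $Pic^0(X) \cong \widehat{A}$ and an isomorphism $H^0(A,\Omega^1_A) \xrightarrow{\ \sim\ } H^0(X,\Omega^1_X)$, every tangent vector $v \in V = H^{0,1}(X)$ is the conjugate $\overline{w}$ of a holomorphic $1$-form $w = f^*\eta$ pulled back from $A$. By Hodge theory (applied at $L=\mathcal{O}_X$ directly, and at a general $L \in Pic^0(X)$ through the harmonic theory of the flat unitary bundle underlying $L$), the map $\cup v$ is conjugate to wedging with $w$, so that the derivative complex becomes the complex of global sections of the sheaf-level Koszul complex
$$ \big(\Omega^\bullet_X \otimes L,\ \wedge\, w\big)\colon\quad L \xrightarrow{\ \wedge w\ } \Omega^1_X\otimes L \xrightarrow{\ \wedge w\ } \Omega^2_X \otimes L \xrightarrow{\ \wedge w\ } \cdots. $$
Wedging with a nonzero cotangent vector is exact, so the cohomology sheaves of this complex are supported on the zero locus $Z(w)$ of $w$.

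The main obstacle — the step I expect to require the most care — is the codimension bookkeeping. For $\eta$ generic, $w = f^*\eta$ vanishes at $x$ only when $\eta$ annihilates the image of $df_x$; over the locus where $df$ has maximal rank $d := \dim a(X)$ this is a codimension-$d$ condition, so $\operatorname{codim}_X Z(w) \geq d$ for generic $w$. Feeding this into the two hypercohomology spectral sequences of the Koszul complex — the cohomology sheaves being supported in dimension $\leq n - d$, which forces the relevant $E_2$-terms to vanish — should yield exactness of the derivative complex at the spot $i$ for all $i < d$, and, tracked quantitatively, should show that the degree-$i$ non-exactness locus in $V$ meets a generic $w$-direction only in codimension at least $d - i$. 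Combined with the tangent-cone containment of the second paragraph, this gives $\operatorname{codim}(\Sigma^i_1(X), Pic^\tau(X)) \geq \dim a(X) - i$. The delicate points to nail down are the uniformity of the Hodge-theoretic identification as $L$ varies over $Pic^0(X)$ and the precise passage from ``exact for generic $v$'' to the quantitative estimate $d - i$, for which one must control how the support of $Z(w)$ degenerates as $w$ moves in $\overline{V}$.
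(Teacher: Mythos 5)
The paper does not prove this statement; it is quoted as background from [GL1], so I am measuring your proposal against the argument of Green--Lazarsfeld that it is clearly trying to reconstruct. Your skeleton is the right one (tangent cone of $\Sigma^i_1$ contained in the non-exactness locus of the derivative complex $(H^\bullet(X,L),\cup v)$; Hodge-theoretic identification of $\cup v$ with $\wedge\,\overline{v}$; input from the Albanese map), but the step you yourself flag as the heart of the argument contains a false claim and a non-working mechanism. The claim $\operatorname{codim}_X Z(f^*\eta)\geq d$ for generic $\eta$ is not true in general: your justification only controls $Z(f^*\eta)$ over the locus where $df$ has rank $d$, and $Z(f^*\eta)$ can have large components inside the degeneracy locus of $df$. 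Concretely, let $X$ be the blow-up of an abelian threefold at a point, so $d=3$; for every $\eta$ the form $f^*\eta$ vanishes along a line in the exceptional $\mathbb{P}^2$ (the locus of normal directions contained in $\ker\eta$), which has codimension $2<3$. So the hypothesis feeding your spectral sequence is unavailable exactly when the Albanese map is not submersive, which is the general case.

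Independently, the spectral-sequence mechanism does not deliver the conclusion even where the support estimate holds. The complex whose exactness you need is the $q=0$ row of the first hypercohomology spectral sequence of $(\Omega^\bullet_X\otimes L,\wedge w)$; its cohomology at spot $i$ is $E_2^{i,0}$, of which only the further subquotient $E_\infty^{i,0}$ injects into $\mathbb{H}^i$, while the second spectral sequence (with $E_2^{p,q}=H^p(\mathcal H^q)$, vanishing only for $p>\dim Z(w)$) gives no vanishing of $\mathbb{H}^i$ in low total degree. Moreover, the passage from ``exact for generic $w$'' to ``the non-exactness locus has codimension $\geq d-i$'' is not bookkeeping: it is the theorem. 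Green--Lazarsfeld avoid $Z(w)$ entirely and instead bound, for a fixed nonzero twisted holomorphic $i$-form $\xi$, the space $\{w\in f^*H^0(\Omega^1_A): w\wedge\xi=0\}$ by evaluating at a single general point $x$ with $\xi(x)\neq 0$ and $\operatorname{rank}(df_x)=d$; there the evaluation image of $f^*H^0(\Omega^1_A)$ in $T^*_xX$ is $d$-dimensional and the divisors of a nonzero $i$-covector form a space of dimension at most $i$, giving codimension $\geq d-i$ pointwise, after which a separate (and genuinely delicate) argument handles the image of $H^0(\Omega^{i-1}\otimes L)$ so that one controls kernel modulo image rather than just the kernel. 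You would need to replace your third paragraph by an argument of this kind; as written, the proof does not close.
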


\begin{thm}[Green-Lazarsfeld]
All $\Sigma^i_k(X)$ are translates of subtori in $Pic^\tau(X)$.
\end{thm}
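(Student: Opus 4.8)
The plan is to reduce the global assertion to a local statement about the analytic germ of each $\Sigma^i_k(X)$ at each of its points, and then to show that germ is a finite union of translates of subtori. Since $Pic^\tau(X)$ is the disjoint union of the torsion translates of its identity component $Pic^0(X)=H^1(X,\mathcal{O}_X)/H^1(X,\mathbb{Z})$, and each $\Sigma^i_k(X)$ is a closed analytic subset, it suffices to work near a fixed $L_0\in\Sigma^i_k(X)\cap Pic^0(X)$. The exponential map $\exp\colon H^1(X,\mathcal{O}_X)\to Pic^0(X)$ is a local biholomorphism at the origin, so I would transport the germ into the Zariski tangent space $T_{L_0}Pic^0(X)=H^1(X,\mathcal{O}_X)$; there a translate of a subtorus is exactly an affine subspace $\theta_0+W$ whose direction $W$ is a complex subspace defined over the lattice $H^1(X,\mathbb{Z})$. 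The goal then becomes to show that the germ is a finite union of such rational affine subspaces.

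First I would linearize the jumping condition. Let $\mathcal{P}$ be the Poincar\'e line bundle on $X\times Pic^0(X)$ and $\pi$ the second projection; then $R\pi_*\mathcal{P}$ is represented near $L_0$ by a bounded complex of finite free $\mathcal{O}$-modules whose $i$-th cohomology jumps precisely along $\Sigma^i_k(X)$. The first-order behaviour is governed by the derivative complex, in which a tangent direction $\theta\in H^1(X,\mathcal{O}_X)$ acts on $H^\bullet(X,L_0)$ by cup product
\[
\cdots\to H^{i-1}(X,L_0)\xrightarrow{\ \cup\theta\ }H^i(X,L_0)\xrightarrow{\ \cup\theta\ }H^{i+1}(X,L_0)\to\cdots,
\]
and the tangent cone to $\Sigma^i_k(X)$ at $L_0$ is the set of $\theta$ for which the middle cohomology of this complex has dimension at least $k$.

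The heart of the argument is that this a priori determinantal condition is in fact linear. The locus of $\theta$ with middle cohomology of dimension $\geq k$ is cut out by vanishing of minors of the matrices of $\cup\theta$, so naively it is a cone defined by equations of degree greater than one; the content of the theorem is that Hodge theory collapses these to linear equations. Because $L_0$ has torsion first Chern class it underlies a flat unitary line bundle, so $H^\bullet(X,L_0)$ has harmonic representatives and a Hodge decomposition, and the cup-product action of $H^1(X,\mathbb{C})=H^{1,0}\oplus H^{0,1}$ makes it a module over $\Lambda^\bullet H^1(X,\mathbb{C})$. The $\partial\bar\partial$-lemma makes the controlling differential graded algebra formal; consequently the germ of $\Sigma^i_k(X)$ at $L_0$ is isomorphic to the tangent cone read off from the derivative complex, and—this is the crucial computation—the interaction between $\cup\theta$ for $\theta\in H^{0,1}$ and cup product by the conjugate class in $H^{1,0}$ forces the constant-rank loci of $\cup\theta$ to be complex linear subspaces $W\subset H^1(X,\mathcal{O}_X)$. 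Thus the germ is a finite union of linear subspaces, and one moreover extracts a local product structure exhibiting invariance of $\Sigma^i_k(X)$ under translation by $\exp(W)$.

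Finally I would promote linearity to the subtorus conclusion. Translation invariance propagates by analytic continuation, so the irreducible component of $\Sigma^i_k(X)$ through $L_0$ contains the coset $L_0\cdot\overline{\exp(W)}$, and the closure of a connected complex subgroup of the compact torus $Pic^0(X)$ is a subtorus $T$. A dimension comparison with the tangent space $W$ forces $W$ to be rational with respect to $H^1(X,\mathbb{Z})$, whence $T=\exp(W)$ has complex dimension $\dim W$ and the component equals the coset $L_0\cdot T$; the decomposition of $Pic^\tau(X)$ into torsion translates of $Pic^0(X)$ then yields the statement over all of $Pic^\tau(X)$. I expect the main obstacle to be the linearity step together with its rationality refinement: the determinantal description gives no reason for the loci to be linear, and it is precisely the real Hodge structure on $H^1(X,\mathbb{C})$—available only because $X$ is compact K\"ahler—that must be exploited both to collapse the higher-degree equations to linear ones and to guarantee that the resulting subspaces are rational, so that they exponentiate to closed subtori rather than dense, non-closed one-parameter subgroups.
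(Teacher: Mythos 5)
Your overall route is the same as the paper's: transport the germ of $\Sigma^i_k(X)$ into $H^1(X,\mathcal{O}_X)$ via the exponential map, show the germ is linear by identifying the deformation of $H^i$ with the cup-product (derivative) complex, and then exponentiate back to get a translate of a subtorus. This is exactly how the paper deduces the statement from Theorem \ref{linear} in the introduction. However, there are two points where your write-up has a genuine gap rather than just compressed detail.

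First, you propose to work at an arbitrary fixed $L_0\in\Sigma^i_k(X)$ and to show the germ there is a finite union of linear subspaces. If $L_0\in\Sigma^i_{k+1}(X)$, i.e.\ $\dim H^i(X,L_0)=k'>k$, the condition that the middle cohomology of $H^{i-1}(X,L_0)\xrightarrow{\cup\theta}H^i(X,L_0)\xrightarrow{\cup\theta}H^{i+1}(X,L_0)$ have dimension $\geq k$ is genuinely determinantal (a rank condition on the two maps), and neither formality nor the $\partial\bar\partial$-lemma turns it into a finite union of linear subspaces; these resonance-type loci are in general honest determinantal varieties, and analyzing them is precisely the harder problem studied in \cite{dps}, \cite{dp}. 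The standard repair, and the one the paper uses, is to take an irreducible component $Y$ of $\Sigma^i_k(X)$, replace $k$ by the maximal $k'$ with $Y\subset\Sigma^i_{k'}(X)$, and work at a \emph{general} point $P$ of $Y$, so that $P\notin\Sigma^i_{k'+1}(X)$. There $\dim H^i(X,L_P)=k'$ exactly, and the jump condition forces both cup-product maps to vanish identically, which is a linear condition on $\theta$ with no Hodge theory needed. You should make this reduction explicit; without it the "finite union of linear subspaces" claim at special points is unsupported.

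Second, you locate the role of Hodge theory in "collapsing the determinantal equations to linear ones," but (at a general point) the tangent cone is linear for the trivial reason just given. What Hodge theory actually buys is that the analytic germ of $\Sigma^i_k(X)$ \emph{equals} this linear tangent cone, i.e.\ that there are no higher-order corrections. In the paper this is the content of Proposition \ref{main} (freeness of $H^i(X,E_A)$ over an Artinian ring $A$ is equivalent to $D(E_A)$ annihilating $H^i$ and $H^{i-1}$ under cup product), whose engine is Lemma \ref{hf}: the contraction of harmonic forms is again harmonic, so the relevant $\bar\partial_A$-closedness can be checked on harmonic representatives with no correction terms. Your appeal to formality of the controlling DGLA is a legitimate alternative mechanism (it is the route of \cite{gm} and \cite{ma}), but as written it is an assertion, not an argument; you would need to actually invoke formality and show it identifies the germ with the cohomology jump locus of the derivative complex. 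Your final rationality step (closedness of the analytic component forces the linear direction $W$ to be defined over $H^1(X,\mathbb{Z})$, so $\exp(W)$ is a closed subtorus rather than a dense subgroup) is correct and is a point the paper itself passes over silently.
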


The second theorem was generalized and improved in different directions. In \cite{a}, \cite{b} and \cite{li0}, it was partially generalized to quasiprojective varieties. In \cite{s2}, it was proved that when $X$ is a smooth projective variety, $\Sigma^i_k$ are unions of torsion translates of subtori. 

In this paper, we generalize the second theorem to vector bundles of higher rank. Let $\mathcal{M}$ be the moduli space of stable vector bundles of rank $r$ and with vanishing chern classes. Since $\mathcal{M}$ is not reduced in general, the definition of the cohomology jump loci as analytic subspaces is not automatic any more. In the second section, we provide a rigorous definition of the cohomology jump loci $\Sigma^i_k(X)$ in $\mathcal{M}$ and its characterization. 

Let $E$ be a stable vector bundle corresponding to a point $P$ in $\mathcal{M}$. Nadel showed that near $P$, $\mathcal{M}$ is canonically isomorphic to a quadratic cone $C$ in $H^1(X, End(E))$. More precisely, there is a open neighborhood $U$ of $P$ in $\mathcal{M}$, which is isomorphic to an open neighborhood $U'$ of the origin in the cone $C$. And this isomorphism is induced by a canonical exponential map. In the third section, we prove the following main theorem,

\begin{thm}\label{linear}
There exists a linear subspace $H$ of $H^1(X, End(E))$, such that the following holds. When $U$ is sufficiently small, the isomorphism between $U$ and $U'$ induces an isomorphism between $\Sigma^i_k(X)\cap U$ and $H\cap U'$. 
\end{thm}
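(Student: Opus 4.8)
The plan is to convert the jumping condition into linear algebra over the cohomology of $E$ and then to extract linearity from the Kähler structure. Throughout write $V^\bullet=H^\bullet(X,E)$. First I would record the analytic input that powers everything. Since $E$ is stable with vanishing Chern classes on the compact Kähler manifold $X$, the Donaldson--Uhlenbeck--Yau theorem gives $E$ a Hermitian--Einstein metric, and vanishing of the Chern classes forces the curvature to vanish, so $E$ (hence $\mathrm{End}(E)$) underlies a flat unitary bundle. Consequently the Dolbeault complexes $A^{0,\bullet}(X,E)$ and $A^{0,\bullet}(X,\mathrm{End}(E))$ obey the full K\"ahler identities and the principle of two types ($\partial\bar\partial$-lemma). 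Recall that a Maurer--Cartan element $\eta\in A^{0,1}(X,\mathrm{End}(E))$, i.e. $\bar\partial\eta+\tfrac12[\eta,\eta]=0$, deforms $E$ to $E_\eta$ with $\bar\partial_\eta=\bar\partial+\eta$, that $H^i(X,E_\eta)$ is the $i$-th cohomology of $(A^{0,\bullet}(X,E),\bar\partial_\eta)$, that Nadel's cone is $C=\{[\omega]\in H^1(X,\mathrm{End}(E)):[\omega,\omega]=0\in H^2(X,\mathrm{End}(E))\}$ (cup product combined with the bracket), and that the exponential map sends a harmonic $\omega$ to $\eta_\omega=\omega+O(\omega^2)$.

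Second, I would use the $\partial\bar\partial$-lemma to linearize. The flat unitary structure makes $A^{0,\bullet}(X,\mathrm{End}(E))$ a formal differential graded Lie algebra and $A^{0,\bullet}(X,E)$ a formal module over it. Concretely, after shrinking $U$ and choosing the gauge coming from the formality quasi-isomorphism, the twisted complex $(A^{0,\bullet}(X,E),\bar\partial_{\eta_\omega})$ becomes quasi-isomorphic, compatibly in $\omega\in C$, to the finite-dimensional linearized complex
\[
\bigl(V^\bullet,\ \delta_\omega\bigr),\qquad \delta_\omega=\ \cup\,[\omega]\colon V^i\longrightarrow V^{i+1},
\]
where $\cup[\omega]$ uses the pairing $\mathrm{End}(E)\otimes E\to E$. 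On $C$ one has $\delta_\omega^2=\cup\,\tfrac12[\omega,\omega]=0$, so this is genuinely a complex, and the theorem reduces to showing that
\[
\Sigma^i_k\cap U\ \cong\ \bigl\{\,\omega\in C\cap U' : \dim H^i\bigl(V^\bullet,\delta_\omega\bigr)\ge k\,\bigr\}
\]
is cut out, inside $C$, by linear equations in $\omega$.

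Third, and this is the crux, I would prove linearity of these a priori determinantal loci. Since $\dim H^i(V^\bullet,\delta_\omega)=\dim V^i-\mathrm{rank}(\delta_\omega|_{V^{i-1}})-\mathrm{rank}(\delta_\omega|_{V^i})$, the condition $\dim H^i\ge k$ is a rank bound on maps depending linearly on $\omega$; its minors are higher degree, so linearity is not formal. As a sanity check, in the extreme case $k=\dim V^i$ the condition forces $\delta_\omega|_{V^{i-1}}=0$ and $\delta_\omega|_{V^i}=0$, i.e. the vanishing of cup product against fixed classes, which is honestly linear, and $H$ is the corresponding annihilator. To handle all $k$ I would exploit that every nearby bundle $E_\omega$, $\omega\in\mathcal M$, is again flat unitary, so that $H^i(X,E_\omega)=H^{0,i}$ is a Hodge piece of the de Rham cohomology of a varying flat bundle. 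Using the Hermitian adjoints $\delta_\omega^{\ast}$ and the resulting K\"ahler-type decompositions, together with strictness of the Hodge filtration under the deformation, I would show that on $C$ the rank of each $\delta_\omega$ is locally constant away from a fixed linear subspace; the determinantal strata therefore collapse onto linear strata, and intersecting with the cone produces exactly $H\cap U'$. Equivalently, one shows $\Sigma^i_k$ coincides with its tangent cone at the origin and that this tangent cone is a linear space.

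The main obstacle is precisely this last step: passing from the transparently linear extreme case to all $(i,k)$, i.e. ruling out genuinely nonlinear strata coming from the higher minors of $\delta_\omega$. This is where the K\"ahler hypothesis is essential; formality alone governs the shape of $C$ and of the linearized complex but does not force linearity of every $\Sigma^i_k$, and the extra input is the compatibility of the cup-product maps with the Hodge decomposition of the flat bundles $E_\omega$, which rigidifies the rank behavior into linear conditions. A secondary technical point, tied to the phrase ``when $U$ is sufficiently small,'' is to verify that the gauge and the quasi-isomorphism of the second step can be chosen uniformly over a neighborhood $U'$ of the origin in $C$, so that the comparison of jump loci is an isomorphism of germs rather than a mere set-theoretic equality and the higher-order terms $O(\omega^2)$ are absorbed.
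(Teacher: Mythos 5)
Your reduction to a linearized complex is the right instinct, but you have misplaced the crux, and the step you single out as the main obstacle is aimed at a statement that is false. The theorem is local at a point $P$ with $\dim_{\mathbb C}H^i(X,E)=k$ exactly --- this is the standing hypothesis of the paper's Section 3, and it is forced on you in any case because the scheme-theoretic characterization of membership in $\Sigma^i_k$ (via freeness of $H^i(X,E_A)$ for Artinian test rings, Corollary~\ref{ea}) is only available at points of $\Sigma^i_k\setminus\Sigma^i_{k+1}$. So the only case needed is precisely your ``extreme case'' $k=\dim V^i$: the condition $\dim H^i(V^\bullet,\delta_\omega)\geq k$ collapses to $\delta_\omega|_{V^{i-1}}=0$ and $\delta_\omega|_{V^i}=0$, i.e.\ $[\omega]$ annihilates $H^{i-1}(X,E)$ and $H^i(X,E)$ under cup product, and $H$ is the intersection of the two annihilators --- exactly the subspace the paper constructs. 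By contrast, the general-$k$ linearity you devote the bulk of the proposal to is simply not true: at a point lying in $\Sigma^i_{k+1}$, the locus $\Sigma^i_k$ is in general a union of linear pieces (already in rank one, where it is a union of translated subtori) or a genuinely nonlinear determinantal resonance variety (near the trivial bundle in higher rank, as in the work of Dimca--Papadima--Suciu cited in the introduction), so no argument that the rank of $\delta_\omega$ is ``locally constant away from a fixed linear subspace'' can succeed.

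The genuine gap that remains once the statement is correctly scoped is your second step. $\Sigma^i_k(X)$ is a non-reduced analytic subspace cut out by determinantal ideals of a complex of free sheaves representing $\mathbf{R}p_{2*}(\mathcal E)$; to transport it onto $H\cap U'$ you must show that the finite-dimensional harmonic complex $(\mathcal H^{0,\bullet}(E)\otimes\mathcal O_C,\ \tilde\omega\wedge\cdot)$ computes that derived pushforward as a complex of $\mathcal O_C$-modules near the origin (at least after completion), not merely fiber by fiber; asserting a formality quasi-isomorphism ``compatibly in $\omega\in C$'' names this difficulty rather than resolving it, and one must also know that the determinantal ideals, hence the non-reduced structure, are unchanged under such a quasi-isomorphism. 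The paper resolves this by testing against all Artinian local rings: Corollary~\ref{ea} converts schematic membership of $Spec(A)$ in $\Sigma^i_k$ into freeness of $H^i(X,E_A)$, and Proposition~\ref{main} --- proved by induction on $\dim_{\mathbb C}A$ using Nadel's lemma that contractions of harmonic forms are harmonic together with the boundary maps of $0\to E\to E_{j+1}\to E_j\to 0$ --- shows freeness is equivalent to $D(E_A)\subset H$; the two germs then represent the same functor on Artinian rings and so are isomorphic. Your DGLA route can be made rigorous (it is essentially how later treatments proceed), but the quasi-isomorphism invariance of scheme-theoretic jump loci for the pair (algebra, module) is a theorem in its own right, and the harmonicity statement needed to make the module formal is the same input the paper's induction already relies on.
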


\begin{remark}
This vector space $H$ will be defined explicitly in the last section.
\end{remark}
The theorem implies the second theorem of Green-Lazarsfeld. In fact, in the rank one case, there are no higher obstructions for deformation of a line bundle. Hence, the cone is the whole $H^1(X, End(E))=H^1(X, \mathcal{O}_X)$, and locally $\mathcal{M}$ is isomorphic to $H^1(X, \mathcal{O}_X)$ via the inverse of the exponential map (up to a scalar, see Proposition 1.1 of \cite{gl2}). Now, take an irreducible component $Y$ of $\Sigma_k^i(X)$ in $Pic^\tau(X)$. Without loss of generality, we assume $Y$ is not contained in $\Sigma_{k+1}^i(X)$. Then a general point $P$ on $Y$ is not contained in $\Sigma_{k+1}^i(X)$. According to Theorem \ref{linear}, near $P$, $Y$ is the image of a linear subspace of $H^1(X, \mathcal{O}_X)$ under the exponential map. Therefore, $Y$ is a translate of subtorus. Hence, $\Sigma_k^i(X)$ is a union of translates of subtori. 

The main theorem also implies the next corollary about the intersections of those subtori, which is implicit in \cite{gl1} and \cite{gl2}.
\begin{corollary}
Assume $r=1$, that is, $\mathcal{M}=Pic^\tau(X)$. If $P$ is a singular point of $\Sigma^i_k(X)$. Then $P\in \Sigma^i_{k+1}(X)$.
\end{corollary}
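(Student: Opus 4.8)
The plan is to prove the contrapositive: if $P\in\Sigma^i_k(X)$ but $P\notin\Sigma^i_{k+1}(X)$, then $P$ is a smooth point of $\Sigma^i_k(X)$ (the case $P\notin\Sigma^i_k(X)$ being vacuous, since then $P$ is not a point of $\Sigma^i_k(X)$ at all). Writing $L_P$ for the line bundle corresponding to $P$, the hypothesis $P\in\Sigma^i_k\setminus\Sigma^i_{k+1}$ says precisely that $\dim H^i(X,L_P)=k$, i.e. the cohomology takes its minimal admissible value at $P$. The whole point is that at such a point the local model supplied by Theorem \ref{linear} is a single linear subspace, and a single linear subspace is smooth.

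First I would use upper semicontinuity of cohomology along the Poincar\'e bundle: $\Sigma^i_{k+1}(X)$ is a closed analytic subset of $Pic^\tau(X)$, and by assumption it does not contain $P$. Hence, after shrinking the Nadel neighborhood $U$ of $P$, I may assume $U\cap\Sigma^i_{k+1}(X)=\emptyset$, so that on $U$ the cohomology dimension never exceeds $k$. This is exactly the regime in which Theorem \ref{linear}, applied at $P$ (with $H^1(X,End(E))=H^1(X,\mathcal{O}_X)$ in the rank one case), yields a genuine linear subspace $H\subseteq H^1(X,\mathcal{O}_X)$ together with an isomorphism of analytic spaces $U\xrightarrow{\ \sim\ }U'$ carrying $\Sigma^i_k(X)\cap U$ onto $H\cap U'$.

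It then remains to transport smoothness across this identification. The set $H\cap U'$ is an open neighborhood of the origin inside a linear subspace, hence a reduced, irreducible complex submanifold, and in particular smooth at the origin. Since the (inverse) exponential map realizes an isomorphism of analytic spaces between $\Sigma^i_k(X)\cap U$ and $H\cap U'$, the locus $\Sigma^i_k(X)$ is smooth at $P$. Contraposition gives the statement: a singular point of $\Sigma^i_k(X)$ must lie in $\Sigma^i_{k+1}(X)$. Geometrically this recovers the expected picture that the distinct translated subtori making up $\Sigma^i_k(X)$ can only cross one another along the smaller locus where $H^i$ jumps further.

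The step I expect to be the main obstacle is justifying that the clean single-linear-subspace conclusion of Theorem \ref{linear} is really the one available under $\dim H^i(X,L_P)=k$: away from this regime, on $\Sigma^i_{k+1}(X)$, the natural local model is a possibly singular determinantal locus cut out by the vanishing of minors of the cup-product map $H^i(X,L)\to H^{i+1}(X,L)$, and it is precisely the rank condition forced by $P\notin\Sigma^i_{k+1}$ that collapses this to a linear condition. I would also need to be careful that "smooth point'' is asserted for the same analytic-space structure on the jump locus that Theorem \ref{linear} equips it with, so that the isomorphism transfers regularity rather than merely matching underlying sets.
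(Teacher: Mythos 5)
Your proposal is correct and follows the same route the paper intends: the corollary is presented as an immediate consequence of Theorem \ref{linear}, via exactly the contrapositive you give — at a point of $\Sigma^i_k(X)\setminus\Sigma^i_{k+1}(X)$ the local model is $H\cap U'$, an open piece of a linear subspace, hence smooth. The one step worth making explicit (the paper does so in the paragraph just before the corollary) is that $U'$ is open in the ambient vector space only because the obstruction cone $C$ equals all of $H^1(X,\mathcal{O}_X)$ when $r=1$; this unobstructedness is precisely where the rank-one hypothesis enters, since otherwise $H\cap U'=H\cap C\cap(\text{ball})$ could be singular.
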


For arbitrary $r$, it also gives a condition on the singularity of $\Sigma^i_k(X)$.
\begin{corollary}
For any point $P\in \Sigma^i_k(X)\setminus \Sigma^i_{k+1}(X)$, the analytic germ $(\Sigma^i_k(X))_P$ is quadratic.
\end{corollary}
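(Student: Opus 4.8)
The plan is to deduce the statement directly from Theorem~\ref{linear} together with Nadel's description of the cone $C$. Fix $P \in \Sigma^i_k(X) \setminus \Sigma^i_{k+1}(X)$ and let $E$ be the corresponding stable bundle; the hypothesis $P \notin \Sigma^i_{k+1}(X)$ guarantees that $\dim H^i(X, E) = k$ exactly, which is the regime in which the linear subspace $H$ of Theorem~\ref{linear} is defined. By Nadel's theorem a neighborhood $U$ of $P$ in $\mathcal{M}$ is identified, through the canonical exponential map, with a neighborhood $U'$ of the origin in the quadratic cone $C \subseteq H^1(X, End(E))$. Applying Theorem~\ref{linear} and shrinking $U$ if necessary, this identification carries $\Sigma^i_k(X) \cap U$ isomorphically onto $H \cap U'$. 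Passing to analytic germs, I obtain $(\Sigma^i_k(X))_P \cong (H \cap C)_0$.

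It then suffices to check that $(H \cap C)_0$ is a quadratic germ. By construction $C$ is a quadratic cone: it is the common zero locus in $H^1(X, End(E))$ of finitely many homogeneous quadratic forms $q_1, \dots, q_m$, the leading (quadratic) part of the obstruction map assembled from the cup product
$$H^1(X, End(E)) \times H^1(X, End(E)) \longrightarrow H^2(X, End(E)).$$
Intersecting with the linear subspace $H$ merely restricts these forms, so $H \cap C$ is the zero locus in $H$ of $q_1|_H, \dots, q_m|_H$. Since the restriction of a homogeneous degree-two form to a linear subspace is again homogeneous of degree two (or identically zero, in which case it is discarded), $H \cap C$ is once more a quadratic cone, and in particular its germ at the origin is quadratic. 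Transporting this back along the isomorphism of the previous paragraph shows that $(\Sigma^i_k(X))_P$ is quadratic.

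I expect the only point demanding care to be the compatibility of the two structures rather than any substantial new argument. Theorem~\ref{linear} must be used as an isomorphism of analytic subspaces of $U'$, so that $\Sigma^i_k(X) \cap U$ inherits exactly the analytic structure of $H \cap C$ and not merely the same underlying set; granting this --- which is how the theorem is phrased --- the conclusion is immediate. The one thing worth emphasizing is that the forms cutting out $C$ are homogeneous of degree exactly two, so restriction to $H$ cannot produce lower-degree equations and the quadratic nature of the germ is preserved.
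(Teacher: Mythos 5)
Your proposal is correct and follows exactly the route the paper intends: the corollary is stated as an immediate consequence of Theorem~\ref{linear}, since the germ $(\Sigma^i_k(X))_P$ is identified with the germ at the origin of $H\cap C$, the intersection of a linear subspace with the quadratic cone $C$, which is cut out by the (homogeneous degree-two) restrictions of the defining forms of $C$ to $H$. Your added care about using the isomorphism at the level of analytic subspaces, and about $P\notin\Sigma^i_{k+1}(X)$ ensuring $\dim H^i(X,E)=k$ so that $H$ is defined, is exactly the right bookkeeping.
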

This corollary was also obtained by Martinengo \cite{ma} using the differential graded Lie algebra method developed in \cite{gm}. 

We approach the problem by studying maps $s: Spec(A)\to \mathcal{M}$, where $A$ is an Artinian local ring over $\mathbb{C}$. From such a map $s$, we obtain a vector bundle $E_A$ over $X$ by pulling back the Kuranishi family on $X\times \mathcal{M}$. We reduce the problem of studying the cohomology jump loci $\Sigma^i_k(X)$ to the problem of studying the cohomology group $H^i(X, E_A)$ as an $A$-module. In \cite{li}, Libgober studied the tangent space of $\Sigma^i_k(X)$ by studying  $H^i(X, E_A)$ for $A=\mathbb{C}[\varepsilon]/(\varepsilon^2)$.

Under the philosophy of non-abelian Hodge theory (e.g. \cite{s1}), the moduli space of stable vector bundles with vanishing chern classes corresponds to the moduli space of irreducible unitary representations. The cohomology jump loci for higher rank unitary (and general algebraic group) representations was studied in \cite{dps} and \cite{dp}. They studied the cohomology jump loci near the trivial representation, and they obtained deformation theory of $\Sigma^i_k(X)$ even along $\Sigma^i_{k'}(X)$ for $k'>k$ by studying the resonance varieties. Compared to their work, our result concerns every point in the moduli space, not only near the trivial bundle, and it handles the non-reduced part of the moduli space. 

Throughout the paper, $X$ is a connected compact K\"ahler manifold and $E$ is a holomorphic vector bundle on $X$ of rank $r$. $End(E)$ is the holomorphic vector bundle $\mathscr Hom(E, E)$. By an Artinian local ring, we always mean an Artinian local ring which is of finite type over $\mathbb{C}$. 

\textbf{Aknowlegement.} I thank Prof. Arapura for introducing the results of \cite{gl1}, \cite{gl2} to me, and answering many of my questions. I also appreciate Prof. Budur for reading a previous version of this paper and giving many helpful suggestions.  And I thank Prof. Simpson for drawing my attention to some related works.

\section{Cohomology jump loci and its characterization}
First of all, we consider a rather simple situation. Let $M$ be an analytic space, and denote its structure sheaf by $\mathcal{O}_M$. Let
\begin{equation}\label{free}
\cdots\longrightarrow F^{i-1}\stackrel{d^{i-1}}{\longrightarrow}F^i\stackrel{d^i}{\longrightarrow}F^{i+1}\longrightarrow \cdots
\end{equation}
be a complex of locally free $\mathcal{O}_M$ sheaves. We will define the cohomology jump loci as analytic subspaces, and give their characterization. 

Since $d^i: F^i\to F^{i+1}$ is a homomorphism of locally free $\mathcal{O}_M$ modules, we can define the determinantal ideal sheaf of $m\times m$ minors to be $I_m(d_i)$. The convention is $I_m(d^i)=0$ when $m> min\{rank(F_i), rank(F_{i+1})\}$, and $I_m(d^i)=\mathcal{O}_M$ when $m\leq 0$. 

One can interpret the analytic subspace of $M$ defined by $I_m(d^i)$ as the locus where $rank(d^i)\leq m-1$. Thus, we define the $i$-th \textbf{cohomology jump ideals} as follows
$$J^i_k(F^{\bullet})\stackrel{\textrm{def}}{=}\bigcap_{a+b=l_i-k+2}\left(I_{a}(d^{i-1})+I_{b}(d^i)\right)$$
where $l_i$ is the rank of $F^i$ as locally free $\mathcal{O}_M$ module. The $i$-th \textbf{cohomology jump locus} $\Sigma^i_k(F^{\bullet})$ of the complex $F^{\bullet}$ is defined to be the closed analytic subspace of $M$ associated to the ideal sheaf $J^i_k(F^{\bullet})$. It follows immediately from the definition that $J^i_k(F^\bullet)\subset J^i_{k+1}(F^\bullet)$, or equivalently, $\Sigma^i_{k+1}(F^\bullet)\subset \Sigma^i_{k}(F^\bullet)$ as analytic subspaces of $M$. And it is easy to check that the closed points in $\Sigma^i_k(F^{\bullet})$ corresponds to the closed points $P$ on $M$ satisfying $dim H^i(F^\bullet\otimes_{\mathcal{O}_M} \mathbb{C}_P)\geq k$. The following proposition will characterize the non-reduced part of $\Sigma^i_k(F^\bullet)$.
\begin{prop}\label{csl}
Let $P$ be a point in $\Sigma^i_k(F^\bullet)\setminus \Sigma^i_{k+1}(F^\bullet)$, and let $A$ be an Artinian local ring over $\mathbb{C}$. Then for a map $s: Spec(A)\to M$, whose set theoretic image is $P$, the following two statements are equivalent
\begin{itemize}
\item[\textup{(i)}] the schematic image of $s$ lies in $\Sigma^i_k(F^\bullet)$,
\item[\textup{(ii)}] $H^i(s^{*}(F^\bullet))$ is a free module over $Spec(A)$ of rank $k$.
\end{itemize}
\end{prop}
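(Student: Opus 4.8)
The plan is to work entirely in the local ring at $P$ and reduce the complex to a minimal form, after which both conditions (i) and (ii) become the single statement that the two relevant differentials pull back to zero over $A$. Since the set-theoretic image of $s$ is the single point $P$, the map factors through $\operatorname{Spec}(\mathcal{O}_{M,P})$ and corresponds to a local homomorphism $\phi\colon \mathcal{O}_{M,P}\to A$ of local $\mathbb{C}$-algebras. Writing $R=\mathcal{O}_{M,P}$ with maximal ideal $\mathfrak{m}$, all the determinantal ideals and all the modules $H^i(s^{*}(F^\bullet))$ are computed from the stalk at $P$, so it suffices to analyze the three-term complex of finite free $R$-modules $F^{i-1}\to F^i\to F^{i+1}$.

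The main tool is the minimal decomposition of a bounded complex of finite free modules over the Noetherian local ring $R$: there is an isomorphism of complexes $F^\bullet\cong G^\bullet\oplus T^\bullet$, where every differential of $G^\bullet$ has entries in $\mathfrak{m}$ and $T^\bullet$ is a direct sum of trivial acyclic complexes $R\xrightarrow{\,1\,}R$. Because the differentials of $G^\bullet$ vanish modulo $\mathfrak{m}$, one has $\operatorname{rank}(G^j)=\dim_{\mathbb{C}}H^j(F^\bullet\otimes_R\mathbb{C}_P)$ for every $j$; in particular the hypothesis $P\in\Sigma^i_k(F^\bullet)\setminus\Sigma^i_{k+1}(F^\bullet)$ forces $\operatorname{rank}(G^i)=k$. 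I denote by $\delta^{i-1},\delta^i$ the differentials of $G^\bullet$ in degrees $i-1$ and $i$.

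Next I would translate the jump ideal into this language. Determinantal ideals are unchanged under composition with invertible matrices and under adjoining zero rows and columns, and for a block map $\delta^j\oplus\mathrm{id}_{c}$ one has $I_m=I_{m-c}(\delta^j)$. Applying this to the decompositions $d^{j}\cong\delta^{j}\oplus\mathrm{id}$ and substituting into the definition of $J^i_k(F^\bullet)$, the index shifts conspire, using $\operatorname{rank}(G^i)=k$, so that the intersection collapses to the single surviving term $I_1(\delta^{i-1})+I_1(\delta^i)$, namely the ideal of $R$ generated by all entries of $\delta^{i-1}$ and $\delta^i$. Hence condition (i), that the schematic image of $s$ lie in $\Sigma^i_k(F^\bullet)$, is precisely $\phi\big(I_1(\delta^{i-1})+I_1(\delta^i)\big)=0$, that is, $s^{*}\delta^{i-1}=0$ and $s^{*}\delta^i=0$. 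For condition (ii), since $s^{*}=-\otimes_R A$ is additive and $s^{*}T^\bullet$ is acyclic, $H^i(s^{*}(F^\bullet))=H^i(s^{*}(G^\bullet))$ is the subquotient $\ker(s^{*}\delta^i)/\operatorname{im}(s^{*}\delta^{i-1})$ of the free module $A^k=s^{*}(G^i)$. As $A$ is Artinian of finite length, a subquotient of $A^k$ is free of rank $k$ if and only if it has the full length $k\cdot\operatorname{length}(A)$, which forces $\ker(s^{*}\delta^i)=A^k$ and $\operatorname{im}(s^{*}\delta^{i-1})=0$, i.e. once more $s^{*}\delta^i=0$ and $s^{*}\delta^{i-1}=0$. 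This is the same condition as in (i), giving the equivalence.

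I expect the main obstacle to be the middle steps: producing the minimal decomposition compatibly for both differentials at once, and carrying out the determinantal bookkeeping that makes $J^i_k(F^\bullet)$ collapse to $I_1(\delta^{i-1})+I_1(\delta^i)$. The exact cancellation relies crucially on the hypothesis $P\notin\Sigma^i_{k+1}(F^\bullet)$, which pins $\operatorname{rank}(G^i)$ to $k$; without it the shifted indices would not align and the jump ideal would not reduce to the vanishing of the minimal differentials. The Artinian length argument in the analysis of (ii) is then routine.
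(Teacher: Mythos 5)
Your proof is correct, and it takes a genuinely different route from the paper's. The paper keeps the original differentials $d^{i-1},d^i$ throughout: it uses $P\notin\Sigma^i_{k+1}(F^\bullet)$ to isolate the unique pair $(a,b)$ with $a+b=l_i-k+2$ and $I_a(d^{i-1})+I_b(d^i)\subset m_P$, reformulates (i) as $I_a(d^{i-1}_A)+I_b(d^i_A)=0$, and then proves the equivalence with (ii) via a lemma (proved with Nakayama) that a map $\sigma$ of free $A$-modules with $I_c(\sigma)=0$ and $I_{c-1}(\sigma\otimes_A\mathbb{C})=\mathbb{C}$ has free image of rank $c-1$, followed by bookkeeping with the short exact sequences for $\mathrm{Im}$, $\mathrm{Ker}$ and $H^i$. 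You instead normalize once and for all by the minimal decomposition $F^\bullet\cong G^\bullet\oplus T^\bullet$ over the Noetherian local ring $\mathcal{O}_{M,P}$, after which the hypothesis $P\notin\Sigma^i_{k+1}$ pins $\operatorname{rank}(G^i)=k$, the jump ideal collapses to $I_1(\delta^{i-1})+I_1(\delta^i)$, and both (i) and (ii) become the single condition $s^*\delta^{i-1}=s^*\delta^i=0$; for (ii) the length count on subquotients of $A^k$ cleanly replaces the paper's Nakayama argument. The two steps you flag as obstacles are in fact standard and unproblematic: splitting off trivial complexes $R\xrightarrow{1}R$ compatibly with both differentials is the usual minimalization of a complex of finite free modules over a local ring (the relation $d^id^{i-1}=0$ is exactly what forces the split-off rank-one summand of $F^i$ to be a direct summand of the whole complex), and the identity $I_m(\delta\oplus\mathrm{id}_c)=I_{m-c}(\delta)$ is elementary Laplace expansion. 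Your approach buys a sharper structural statement --- locally $J^i_k(F^\bullet)$ is literally the ideal generated by the entries of the minimal differentials, which also makes the independence of the choice of representative and the remark comparing with $\bar J^i_k$ transparent --- at the cost of invoking the minimal decomposition, which the paper's more hands-on argument avoids.
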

\begin{proof}
Since the problem is local at $P$, we assume $M$ to be a closed analytic subspace of a complex ball. Denote the ring of holomorphic functions on $M$ to be $R$. By possibly shrinking the complex ball, we can assume the sheaves $F^{i-1}$, $F^i$ and $F^{i+1}$ to be the sheafification of free $R$-modules. By abusing notations, we also write $F^i$ for their global sections, which are now free $R$-modules. 

Since $P\in\Sigma^i_{k}(F^\bullet)$, but $P\notin \Sigma^i_{k+1}(F^\bullet)$, there is a unique pair of integers $(a, b)$ such that $a+b=l_i-k+2$ and $I_a(d^{i-1})+I_b(d^i)\subset m_P$, where $m_P$ is the maximal ideal of $R$ corresponding to the point $P$. In fact, suppose $(a', b')$ is another pair, and without loss of generality, suppose $a'>a, b'<b$. Then $I_{a}(d^{i-1})+I_{b'}(d^i)\subset m_P$ implies $P\in \Sigma^i_{k+b-b'}$, a contradiction to $P\notin \Sigma^i_{k+1}(F^\bullet)$.

Denote the kernel of $s^*: R\to A$ by $J_A$. Since $I_{a'}(d^{i-1})+I_{b'}(d^i)\nsubseteq m_P$ for any other pair $(a', b')\neq (a, b)$ satisfying $a'+b'=l_i-k+2$, statement (i) is equivalent to
\begin{enumerate}
\item[(i')]$I_a(d^{i-1})+I_b(d^i)\subset J_A$.
\end{enumerate}
Denote the complexes $(F^\bullet, d^\bullet)\otimes_R A$ and $(F^\bullet, d^\bullet)\otimes_R \mathbb{C}$ by $(F^\bullet_A, d^\bullet_A)$ and $(F^\bullet_\mathbb{C}, d^\bullet_{\mathbb{C}})$ respectively. Then statement (i') and statement (ii) are equivalent to 
\begin{enumerate}
\item[(i'')]$I_a(d^{i-1}_A)+I_b(d^i_A)=0$
\end{enumerate}
and
\begin{enumerate}
\item[(ii')]$H^i(F^\bullet_A)$ is a free $A$-module of rank $k$
\end{enumerate}
respectively. And the previous arguments imply that $I_{a-1}(d^{i-1})\nsubseteq m_P$ and $I_{b-1}(d^i)\nsubseteq m_P$, hence $I_{a-1}(d^{i-1}_\mathbb{C})=\mathbb{C}$ and $I_{b-1}(d^i_\mathbb{C})=\mathbb{C}$. 

We need some elementary facts about $A$-modules.

\begin{lemma}
Let $\sigma: A^\alpha\to A^\beta$ be a homomorphism of free $A$-modules, where $A$ is an Artinian local ring. Suppose $I_c(\sigma)=0$ and $I_{c-1}(\sigma\otimes_A \mathbb{C})=\mathbb{C}$. Then the image $Im(\sigma)$ is a free $A$-module of rank $c-1$.
\end{lemma}
\begin{proof}
$I_{c-1}(\sigma\otimes_A \mathbb{C})=\mathbb{C}$ implies that $\dim_\mathbb{C}Im(\sigma\otimes_A\mathbb{C})\geq c-1$. Therefore, by Nakayama's lemma, $Im(\sigma)$ contains a free $A$-submodule of rank $c-1$, which we denote by $N$. Naturally, $N$ is a submodule of $A^\beta$, the target of $\sigma$. Denote $\sigma': A^\alpha\to A^\beta/N$ to be the composition of $\sigma$ and the quotient map $A^\beta\to A^\beta/N$. Since $A^\beta/N\cong A^{\beta-c+1}$, the determinantal ideal $I_c(\sigma')$ still makes sense. Since $Im(\sigma)$ contains $N$, by choosing proper bases for $A^\alpha$ and $A^\beta$, we have obviously $I_{c}(\sigma)=I_{1}(\sigma')$. Therefore, $I_{1}(\sigma')=0$, that is $\sigma'=0$. Hence, $Im(\sigma)=N$ is a free $A$-module of rank $c-1$.
\end{proof}

Now, assuming (i''), we have $Im(d^{i-1}_A)\cong A^{a-1}$ and $Im(d^i_A)\cong A^{b-1}$. From the short exact sequence $0\to Ker(d^i_A)\to F^i_A\to Im(d^i_A)\to 0$, one deduces $Ker(d^i_A)\cong A^{l_i-b+1}$. Now, the short exact sequence $0\to Im(d^{i-1}_A)\to Ker(d^i_A)\to H^i(F^\bullet_A)\to 0$ implies $H^i(F^\bullet_A)\cong A^{l_i-a-b+2}$, which is nothing but statement (ii'). So we have showed $\text{(i'')}\Rightarrow\text{(ii')}$.

Conversely, we assume statement (ii'), i.e., $H^i(F^\bullet_A)\cong A^{l_i-a-b+2}$. By the short exact sequence $0\to Im(d^{i-1}_A)\to Ker(d^i_A)\to H^i(F^\bullet_A)\to 0$, one has $Ker(d^i_A)\cong Im(d^{i-1}_A)\oplus A^{l_i-a-b+2}$. Hence the short exact sequence $0\to Ker(d^i_A)\to F^i_A\to Im(d^i_A)\to 0$ gives rise to
\begin{equation}\label{ses}
0\to Im(d^{i-1}_A)\to A^{a+b-2}\to Im(d^{i}_A)\to 0.
\end{equation}
From the fact that $I_{a-1}(d^{i-1}_{\mathbb{C}})=\mathbb{C}$ and $I_{b-1}(d^i_\mathbb{C})=\mathbb{C}$, one immediately has $\dim_\mathbb{C}(Im(d^{i-1}_\mathbb{C}))\geq a-1$ and $\dim_\mathbb{C}(Im(d^i_\mathbb{C}))\geq b-1$. By Nakayama's lemma, $Im(d^{i-1}_A)$ contains a free $A$-submodule of rank $a-1$ and $Im(d^i_A)$ contains a free $A$-submodule of rank $b-1$. According to the short exact sequence (\ref{ses}), it immediately follows that $d^{i-1}_A\cong A^{a-1}$ and $d^i_A\cong A^{b-1}$. Therefore, statement (i'') holds. 

We have showed $\text{(i'')}\Leftrightarrow \text{(ii')}$, and hence the proposition.
\end{proof}

\begin{remark}
In \cite{dp}, the cohomology jump loci of $(F^\bullet, d^\bullet)$ is defined by the ideal $\bar{J}^i_k(F^\bullet)=I_{l_i-k+1}(d^{i-1}\oplus d^i)$. We don't know whether the two definitions are equivalent, but they define the same set of points and one can prove the same proposition for $\bar{J}^i_k(F^\bullet)$.
\end{remark}

Let $X$ be a compact K\"ahler manifold, and let $\mathcal{M}$ be the moduli space of stable vector bundles of rank $r$ with vanishing chern classes. More details about $\mathcal{M}$ will be given in next section. As the moduli space of stable vector bundles, $\mathcal{M}$ is an analytic space covered by open sets $\{U_\lambda\}$. And over each $X\times U_\lambda$, there exists a vector bundle $\mathcal{E}_\lambda$ as the universal family of vector bundles. In particular, for any point $P\in U_\lambda\subset\mathcal{M}$, as a point in the moduli space, it corresponds to the vector bundle $\mathcal{E}_\lambda|_{X\times P}$. Furthermore, by possibly passing to a finer covering, we can assume each $U_\lambda$ to be holomorphically convex. 

Denote the projections from $X\times U_\lambda$ to the first and second factors by $p_1$ and $p_2$ respectively. Let $\mathbf{R}p_{2*}(\mathcal{E}_\lambda)$ be the push-forward of $\mathcal{E}_\lambda$ by $p_2$ in the derived category. Based on Grauert's direct image theorem \cite{l}, the argument in section 5 of \cite{m} shows that $\mathbf{R}p_{2*}(\mathcal{E}_\lambda)$ can be represented by a right bounded complex $F^\bullet$ of free sheaves over $U_\lambda$ of finite rank. Then the complex $F^\bullet$ computes the cohomology of $\mathbf{R}p_{2*}(\mathcal{E}_\lambda)$. More precisely, for any closed subanalytic space $Z$ of $U_\lambda$, inducing the following diagram,
\centerline{
\xymatrix{
X\times Z\ar[r]^{i_X} \ar[d]_{p_Z}&X \times U_\lambda\ar[d]\\
Z\ar[r]^{i_0} & U_\lambda
}
}
there is a canonical isomorphism,
$$\mathbf{R}^i p_{Z*}(i_X^*(\mathcal{E}_\lambda))\cong H^i(i_o^*(F^\bullet)).$$
In particular, the closed points on $\Sigma^i_k(X)$ corresponds to the locus where the $i$-th cohomology group of the vector bundle has dimension at least $k$. 

We define the \textbf{cohomology jump loci} $\Sigma^i_{k, \,\lambda}$ on $U_\lambda$ to be analytic subspaces $\Sigma^i_k(F^\bullet)$. It can be showed that $\Sigma^i_{m, \,\lambda}$ doesn't depend on the choice of the representative $F^\bullet$ (see Remark (3.2) on page 179 of \cite{acgh}) and $\Sigma^i_{k, \,\lambda}$ patch together to form a closed analytic subspace $\Sigma^i_k(X)$ of $\mathcal{M}$. 

By applying Proposition \ref{csl} to $\mathbf{R}p_{2*}(\mathcal{E}_\lambda)$, we obtain the following corollary. 
\begin{corollary}\label{ea}
Let $P$ be a point in $\Sigma^i_k(X)\setminus \Sigma^i_{k+1}(X)$, and let $A$ be an Artinian local  ring over $\mathbb{C}$. Then for a map $s: Spec(A)\to \mathcal{M}$ whose set theoretic image is $P$, the schematic image of $s$ lies in $\Sigma^i_k(X)$ if and only if $H^i(X, E_A)$ is a free sheaf over $Spec(A)$ of rank $k$. Here $E_A$ is the pull back of $\mathcal{E}_\lambda$ by $id \times s: X\times Spec(A)\to X\times U_\lambda$, where $U_\lambda$ and $\mathcal{E}_\lambda$ are defined as above and $P\in U_\lambda$.
\end{corollary}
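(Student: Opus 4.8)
The plan is to deduce the corollary as a direct application of Proposition \ref{csl} to the complex $F^\bullet$ that represents $\mathbf{R}p_{2*}(\mathcal{E}_\lambda)$, once the two conditions appearing in the proposition are matched with the two conditions in the corollary. First I would take $M = U_\lambda$ and work with the bounded complex $F^\bullet$ of free $\mathcal{O}_{U_\lambda}$-sheaves representing $\mathbf{R}p_{2*}(\mathcal{E}_\lambda)$, shrinking $U_\lambda$ around $P$ so that, exactly as in the proof of Proposition \ref{csl}, each $F^j$ is the sheafification of a free module over $R = \mathcal{O}(U_\lambda)$. By definition $\Sigma^i_k(F^\bullet) = \Sigma^i_{k,\lambda} = \Sigma^i_k(X) \cap U_\lambda$, and since the set-theoretic image of $s$ is the point $P \in U_\lambda$, the clause ``the schematic image of $s$ lies in $\Sigma^i_k(X)$'' coincides verbatim with condition (i) of Proposition \ref{csl}. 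Because $P \in \Sigma^i_k(X) \setminus \Sigma^i_{k+1}(X)$, the hypotheses of the proposition are satisfied, so its conclusion yields that this clause is equivalent to the statement that $H^i(s^*(F^\bullet))$ is a free $A$-module of rank $k$.

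It then remains to identify $H^i(s^*(F^\bullet))$ with $H^i(X, E_A)$. For this I would invoke the base change isomorphism $\mathbf{R}^i p_{Z*}(i_X^*(\mathcal{E}_\lambda)) \cong H^i(i_0^*(F^\bullet))$ with $Z = Spec(A)$, $i_0 = s$, and $i_X = id \times s$. Since $\mathcal{E}_\lambda$ is locally free, $i_X^*(\mathcal{E}_\lambda) = E_A$, while $\mathbf{R}^i p_{Z*}(E_A)$, being the pushforward to the one-point space $Spec(A)$, is precisely $H^i(X, E_A)$ viewed as an $A$-module. Substituting this identification turns the rank-$k$ freeness of $H^i(s^*(F^\bullet))$ into the rank-$k$ freeness of $H^i(X, E_A)$ as a sheaf over $Spec(A)$, closing the chain of equivalences.

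The one point that requires care --- and the step I expect to be the genuine obstacle --- is the base change identification when $s$ fails to be a closed immersion, so that $Spec(A)$ is not literally a closed subanalytic space of $U_\lambda$ as demanded by the isomorphism quoted above (this can genuinely happen, e.g. when $J_A = \ker(s^*)$ equals the maximal ideal $m_P$ and $s$ contracts the fat point $Spec(A)$ onto the reduced point $P$). I would resolve this by passing to the derived form of proper base change: since $p_2$ is proper (as $X$ is compact) and $F^\bullet$ is a bounded-above complex of locally free, hence flat, $\mathcal{O}_{U_\lambda}$-sheaves representing $\mathbf{R}p_{2*}(\mathcal{E}_\lambda)$, the complex $s^*(F^\bullet) = F^\bullet \otimes_R A$ computes $Ls^*\mathbf{R}p_{2*}(\mathcal{E}_\lambda)$, and derived proper base change identifies the latter with $\mathbf{R}p_{Z*}(E_A)$ for an arbitrary morphism $s$, not merely for closed immersions. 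Taking $H^i$ then gives $H^i(s^*(F^\bullet)) \cong H^i(X, E_A)$ in full generality. Once this identification is in place, the remaining steps are purely formal and the corollary follows.
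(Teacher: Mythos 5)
Your proposal is correct and follows essentially the same route as the paper, which obtains the corollary directly by applying Proposition \ref{csl} to the complex $F^\bullet$ of free sheaves representing $\mathbf{R}p_{2*}(\mathcal{E}_\lambda)$. Your additional care with the identification $H^i(s^*(F^\bullet))\cong H^i(X,E_A)$ when $s$ is not a closed immersion fills in a step the paper leaves implicit, and your resolution via flatness of the $F^j$ and proper base change is sound.
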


\section{Deformation of holomorphic vector bundles}
The moduli space of stable vector bundles over a compact K\"ahler manifold is constructed in \cite{lo} as a Hausdorff complex analytic space. In \cite{n}, Nadel studied the singularity of the components of the moduli space corresponding to vector bundles with vanishing chern classes. He showed that those components have quadratic singularities. In this section, we will review the arguments of \cite{n}, and study the deformation theory via Artinian local rings. 

Let $X$ be a compact K\"ahler manifold. We denote by $\mathcal{M}$ the moduli space of stable holomorphic vector bundles of rank $r$ with vanishing chern classes, which was constructed in \cite{lo}.  Let $E$ be a closed point in $\mathcal{M}$. By abusing notation, we also write $E$ for the corresponding stable vector bundle on $X$. The Zariski tangent space of $\mathcal{M}$ at $E$ is $T_E \mathcal{M}=H^1(X, End(E))$. Since $E$ is stable and of vanishing chern classes, the Hermitian-Einstein metric on $E$ is harmonic in the sense of \cite{s1}, i.e., the curvature of the chern connection is zero. This harmonic metric gives rise to a harmonic metric on the vector bundle $End(E)$. Therefore, according to Hodge theory, $H^1(X, End(E))$ is spanned by harmonic $(0, 1)$-forms on $End(E)$. Let's denote the harmonic forms in $H^\bullet(X, End(E))$ by $\mathcal{H}^{0,\bullet}(End(E))$. 
\begin{lemma}[Nadel, \cite{n}]\label{nadel}
The wedge product of two harmonic forms in $\mathcal{H}^{0,1}(End(E))$ is again harmonic. Therefore, the cup product 
\begin{equation}
H^1(X, End(E))\times H^1(X, End(E)) \to H^2(X, End(E))\label{cap}
\end{equation}
is isomorphic to the wedge product
$$\mathcal{H}^{0,1}(End(E))\times \mathcal{H}^{0,1}(End(E))\to \mathcal{H}^{0,2}(End(E))$$
if we identify the cohomology class and the harmonic form which represents it. 
\end{lemma}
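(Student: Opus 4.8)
The plan is to run the classical Kähler-identity argument on the bundle $End(E)$, exploiting the one extra input that makes the statement true: the Chern connection here is \emph{flat}. Since $E$ is stable with vanishing Chern classes it carries a Hermitian-Einstein metric whose Chern connection has zero curvature, and this flatness (together with the metric) is inherited by $End(E)=E\otimes E^\ast$. Write $D=\partial+\bar\partial$ for the Chern connection on $End(E)$, so that $\bar\partial$ is the Dolbeault operator and $\partial$ its metric companion. Because the curvature of any Chern connection is of type $(1,1)$, one has $\partial^2=\bar\partial^2=0$ automatically, and flatness gives in addition $\partial\bar\partial+\bar\partial\partial=0$. Let $\Lambda$ be the adjoint of the Lefschetz operator $\omega\wedge(-)$. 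I would start by recording the Kähler (Nakano) identities $\bar\partial^*=-i[\Lambda,\partial]$ and $\partial^*=i[\Lambda,\bar\partial]$, which hold for the Chern connection of any Hermitian holomorphic bundle, regardless of curvature.

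The heart of the argument is the lemma that every $\alpha\in\mathcal{H}^{0,1}(End(E))$ is not only $\bar\partial$-closed but also satisfies $\partial\alpha=0$. To prove this I would argue as follows. Since $\alpha$ has bidegree $(0,1)$ we have $\Lambda\alpha=0$, so the identity $\bar\partial^*=-i[\Lambda,\partial]$ together with $\bar\partial^*\alpha=0$ yields $\Lambda\partial\alpha=0$. Flatness gives $\bar\partial\partial\alpha=-\partial\bar\partial\alpha=0$ because $\bar\partial\alpha=0$. Feeding these two facts into $\partial^*=i[\Lambda,\bar\partial]$ gives $\partial^*\partial\alpha=i\Lambda\bar\partial\partial\alpha-i\bar\partial\Lambda\partial\alpha=0$, and therefore $\|\partial\alpha\|^2=\langle\alpha,\partial^*\partial\alpha\rangle=0$, so $\partial\alpha=0$.

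With this lemma the main statement follows quickly. Given harmonic $(0,1)$-forms $\alpha,\beta$, their product $\alpha\wedge\beta$ (wedge on the form part, composition on the $End(E)$ part) is a $(0,2)$-form. Because $\bar\partial$ is a derivation and $\bar\partial\alpha=\bar\partial\beta=0$, we get $\bar\partial(\alpha\wedge\beta)=0$. For coclosedness, bidegree forces $\Lambda(\alpha\wedge\beta)=0$, so $\bar\partial^*(\alpha\wedge\beta)=-i\Lambda\partial(\alpha\wedge\beta)$; and since $\partial$ is a derivation and $\partial\alpha=\partial\beta=0$ by the lemma, $\partial(\alpha\wedge\beta)=0$, whence $\bar\partial^*(\alpha\wedge\beta)=0$. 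Thus $\alpha\wedge\beta$ is harmonic. The assertion about cup products is then immediate: the cup product of two classes in $H^1(X,End(E))$ is represented by the wedge of their harmonic representatives, and we have just shown this wedge is itself harmonic, hence is the harmonic representative of the product class; so the identification of classes with harmonic forms intertwines the cup product and the wedge product.

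The main obstacle is really the lemma $\partial\alpha=0$, and the only place the hypotheses genuinely enter is the vanishing of the curvature. Without flatness the identity $\bar\partial\partial\alpha=-\partial\bar\partial\alpha$ picks up a curvature correction and the computation of $\partial^*\partial\alpha$ collapses; indeed for general Hermitian holomorphic bundles the wedge of harmonic forms need not be harmonic, so flatness cannot be dispensed with. The restriction to bidegree $(0,1)$ is equally essential: it is what forces $\Lambda(\alpha\wedge\beta)=0$ and leaves $-i\Lambda\partial(\alpha\wedge\beta)$ as the sole surviving term. Before giving the proof I would also note explicitly that flatness and the harmonic metric on $End(E)$ are inherited from those on $E$, so that the Kähler identities above apply to $End(E)$ directly.
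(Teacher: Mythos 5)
Your proof is correct. One remark on provenance: the paper does not actually supply a proof of Lemma \ref{nadel} (it is quoted from Nadel), and the closest argument it does give is the proof of the analogous Lemma \ref{hf}, which takes a genuinely different route. There, harmonicity of the product is obtained by a conjugation trick: on a flat Hermitian--Einstein bundle, $\partial$-harmonicity and $\bar\partial$-harmonicity coincide and the star operator commutes with conjugation, so the metric conjugate of a harmonic $(0,q)$-form is a holomorphic $q$-form; the product of holomorphic forms is holomorphic, and conjugating back gives harmonicity. Your route instead isolates the sub-lemma $\partial\alpha=0$ for $\alpha\in\mathcal{H}^{0,1}(End(E))$ and extracts it from the Nakano identities together with flatness; the wedge $\alpha\wedge\beta$ is then $\partial$- and $\bar\partial$-closed and primitive, hence $\bar\partial^*$-closed. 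The two arguments rest on the same underlying fact --- $\partial\alpha=0$ is exactly the statement that the conjugate of $\alpha$ is holomorphic --- but yours is self-contained and makes explicit precisely where flatness and the bidegree restriction enter (the curvature term in $\bar\partial\partial+\partial\bar\partial$ and the vanishing of $\Lambda$ on $(0,\bullet)$-forms), whereas the paper's conjugation argument outsources the multiplicativity to the ring of holomorphic forms. Note also that your computation generalizes verbatim to harmonic $(0,i)$-forms with values in $E$, since one still has $\Lambda\xi=0$ and $\Lambda\partial\xi=0$ there, so it would equally prove Lemma \ref{hf}.
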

Rewriting the map~(\ref{cap}) as a quadratic map 
$$H^1(X, End(E))\to H^2(X, End(E))$$
we denote its kernel by $C$, which is a quadratic cone in $H^1(X, End(E))$.
\begin{thm}[Nadel, \cite{n}]
The analytic germ of $\mathcal{M}$ at $E$ is isomorphic to the analytic germ of $C$ at the origin. Furthermore, there exists locally an exponential map from $C$ to $\mathcal{M}$, mapping the origin to $E$, inducing the isomorphism of the germs. The exponential map sends $[\phi]$ to the holomorphic vector bundle whose underlying $C^\infty$ bundle is $E$, but with holomorphic structure $\bar\partial_E+\phi$, where $\phi$ is a harmonic form in $\mathcal{H}^{0,1}(End(E))$, $[\phi]$ is its cohomology class and $\bar\partial_E$ is the holomorphic structure on $E$.
\end{thm}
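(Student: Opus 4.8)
The plan is to prove this by Kuranishi deformation theory for the Dolbeault operator, with Lemma~\ref{nadel} collapsing the higher-order obstruction. A holomorphic structure on the underlying $C^\infty$ bundle of $E$ is a semiconnection $\bar\partial_E+\phi$ with $\phi\in A^{0,1}(End(E))$ (smooth $(0,1)$-forms), and integrability, i.e. $(\bar\partial_E+\phi)^2=0$, is equivalent to the Maurer--Cartan equation $\bar\partial_E\phi+\phi\wedge\phi=0$, where $\phi\wedge\phi$ combines the wedge of forms with composition of endomorphisms. Two such operators define isomorphic holomorphic bundles exactly when they differ by the complexified gauge group; since $E$ is stable, hence simple, its infinitesimal automorphisms are only the scalars $H^0(X,End(E))=\mathbb{C}$, so the gauge stabilizer $\mathbb{C}^*$ acts trivially on holomorphic structures and the germ of $\mathcal{M}$ at $E$ is cut out cleanly by a single local slice.

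First I would fix the harmonic Hermitian--Einstein metric supplied by \cite{s1}; this is where stability and vanishing Chern classes enter, since the Chern connection is then flat, so $\bar\partial_E$ carries a genuine Laplacian with Green's operator $G$ and harmonic projection $\mathbb{H}$. Imposing the gauge-fixing condition $\bar\partial_E^*\phi=0$, the standard elliptic slice theorem asserts that every nearby holomorphic structure is gauge-equivalent to one in this slice, uniquely up to the scalar stabilizer, so the slice models the germ $(\mathcal{M},E)$. Using the Hodge decomposition $A^{0,1}=\mathcal{H}^{0,1}\oplus\bar\partial_E A^{0,0}\oplus\bar\partial_E^* A^{0,2}$, I would then rewrite the Maurer--Cartan equation together with the gauge condition as the fixed-point equation $\phi=\eta-\bar\partial_E^*G(\phi\wedge\phi)$ for $\eta\in\mathcal{H}^{0,1}(End(E))=H^1(X,End(E))$, and solve for $\phi=\phi(\eta)$ as a power series convergent in a suitable Sobolev norm by the implicit function theorem and elliptic estimates.

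Here Lemma~\ref{nadel} does all the work. In general $\phi(\eta)$ is an infinite series and the obstruction $\mathbb{H}(\phi(\eta)\wedge\phi(\eta))=0$ is highly nonlinear, but since $\eta$ is harmonic the lemma makes $\eta\wedge\eta$ harmonic, so $G(\eta\wedge\eta)=0$ and the fixed-point equation is solved on the nose by $\phi(\eta)=\eta$ for every $\eta$. Consequently $\bar\partial_E\phi(\eta)+\phi(\eta)\wedge\phi(\eta)=\eta\wedge\eta$, whose harmonic part is itself, so integrability is equivalent to the exact quadratic condition $\eta\wedge\eta=0$. This is precisely the defining equation of $C=\ker\big(H^1(X,End(E))\to H^2(X,End(E)),\ \eta\mapsto[\eta\wedge\eta]\big)$.

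Finally I would assemble these: the Kuranishi slice is identified analytically with a neighborhood of the origin in $C$, the assignment $[\phi]\mapsto(E,\bar\partial_E+\phi)$ is the asserted exponential map, and it induces the germ isomorphism $(\mathcal{M},E)\cong(C,0)$. I expect the main obstacle to be not the algebra, which Lemma~\ref{nadel} renders exact and quadratic, but the functional-analytic scaffolding: choosing the right Sobolev completions, establishing the slice theorem and the convergence of the Kuranishi series, and invoking elliptic regularity to ensure the resulting $\bar\partial$-operators are smooth and that the harmonic representatives are genuine smooth forms.
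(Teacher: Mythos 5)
Your proposal is correct, and it is essentially Nadel's own argument: the paper itself quotes this theorem from \cite{n} without proof, and what you have written is a faithful reconstruction of that proof. The one point worth making explicit is the step you gloss as ``the slice models the germ'': to get an isomorphism of (possibly non-reduced) analytic germs, rather than just a bijection of nearby isomorphism classes, you need that the Kuranishi space --- the zero locus of the obstruction map $\eta\mapsto\mathbb{H}\bigl(\phi(\eta)\wedge\phi(\eta)\bigr)$ with its natural analytic structure --- is a local model for the moduli space $\mathcal{M}$ of \cite{lo}; this is a standard fact for simple bundles but is the place where the germ-level statement actually lives. Granting that, your observation that Lemma~\ref{nadel} forces $G(\eta\wedge\eta)=0$, hence $\phi(\eta)=\eta$ exactly and an obstruction map equal to the quadratic cup product on the nose (so that the vanishing of the harmonic form $\eta\wedge\eta$ coincides with the vanishing of its cohomology class), is precisely the mechanism by which Nadel obtains the quadratic cone $C$ and the exponential map $[\phi]\mapsto(E,\bar\partial_E+\phi)$.
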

Nadel showed that this exponential map gives locally the Kuranishi family $\mathcal{E}$ of holomorphic vector bundles. As a $C^{\infty}$ vector bundle, it is isomorphic to $p_1^*(E)$, the pull-back of $E$ by the projection $p_1: X\times C\to X$. Suppose $\bar\partial_0$ is the holomorphic structure on $\mathcal{E}$, induced by $\mathcal{E}\cong p_1^*(E)$, and suppose $\tilde\zeta$ is the tautological section of $C^{0,1}(End(\mathcal{E}))$ by identifying $C$ as a cone in $\mathcal{H}^{0,1}(End(E))$. Then according to the result of Nadel, the holomorphic structure of $\mathcal{E}$ as the Kuranishi family is $\bar{\partial}_0+\tilde{\zeta}$. 

Suppose $dim_{\mathbb{C}}H^i(X, E)=k$, the purpose of this section is to understand the local behavior of $\Sigma^i_k(X)$ near $E$ as an analytic subspace. Given an Artinian local ring $A$ and a map $s: Spec(A)\to C$, whose set theoretic image is the origin, one can pull back the Kuranishi family of vector bundles over $C$ to $Spec(A)$. Then we obtain a vector bundle $E_A$ over $X_A$, where $X_A=X\times_\mathbb{C} Spec(A)$. First of all, we give an explicit description of the holomorphic structure on $E_A$ for such a map $s:Spec(A)\to C$. The next lemma follows immediately from the property of pull-back. 
\begin{lemma}\label{cinfty}
$$C^{\infty}(E_A)= C^{\infty}(\mathcal{E})\otimes_{\mathcal{O}_C} A$$
where the tensor is over $s^*: \mathcal{O}_C\to A$. Furthermore, the holomorphic structure $\bar{\partial}_A$ on $E_A$ is induced from the holomorphic structure $\bar{\partial}_{\mathcal{E}}$, via the above isomorphism.
\end{lemma}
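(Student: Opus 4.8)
The plan is to unwind the definition of $E_A$ as a pullback and to see that both assertions are formal consequences of how pullback interacts with the $C^\infty$-structure and with the $\bar\partial$-operator; the only genuine input will be that $A$ is finite-dimensional over $\mathbb{C}$.

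First I would identify the underlying $C^\infty$ bundle. By definition $E_A=(id\times s)^*\mathcal{E}$, and since the $C^\infty$ bundle underlying $\mathcal{E}$ is $p_1^*E$, its pullback along $id\times s$ is $(p_1\circ(id\times s))^*E$. Because $p_1\circ(id\times s)$ is nothing but the projection $X_A=X\times Spec(A)\to X$, the $C^\infty$ bundle of $E_A$ is the pullback of $E$ along this projection. Taking global $C^\infty$ sections over the fat point $X_A$ then yields $C^\infty(E_A)=C^\infty(X,E)\otimes_{\mathbb{C}}A$, since $Spec(A)$ is zero-dimensional with coordinate ring $A$ and only contributes the coefficient ring.

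Next I would match this against $C^\infty(\mathcal{E})\otimes_{\mathcal{O}_C}A$. Near the origin, $C^\infty(\mathcal{E})$ is the $\mathcal{O}_C$-module of sections of $p_1^*E$ that are $C^\infty$ along $X$ and holomorphic along $C$, which is the (completed) tensor product $C^\infty(X,E)\,\hat\otimes\,\mathcal{O}_{C,0}$. Applying $-\otimes_{\mathcal{O}_C}A$ along $s^*$ and using that $A$ is a finite-dimensional $\mathbb{C}$-algebra (Artinian of finite type over $\mathbb{C}$), the completed tensor over $\mathcal{O}_C$ followed by base change to $A$ collapses to the ordinary tensor $C^\infty(X,E)\otimes_{\mathbb{C}}A$. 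As both expressions reduce to the same module, the first assertion follows.

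For the holomorphic structure I would appeal to the naturality of pullback for $\bar\partial$-operators: writing $\bar\partial_{\mathcal{E}}=\bar\partial_0+\tilde\zeta$, the product piece $\bar\partial_0$ is unchanged and $\tilde\zeta$ pulls back to $s^*\tilde\zeta$, so the operator induced on $C^\infty(\mathcal{E})\otimes_{\mathcal{O}_C}A$ under the identification above is exactly $\bar\partial_A$. The only point requiring care — and the sole non-formal step — is the passage from the completed tensor product to the ordinary one; this is harmless precisely because $A$ is finite over $\mathbb{C}$ and $Spec(A)$ has zero-dimensional support, so no analytic convergence issues arise and the whole argument becomes purely algebraic once the $C^\infty$ bundle has been identified.
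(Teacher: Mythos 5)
Your argument is correct and is exactly the unwinding of what the paper itself dismisses in one line (``the next lemma follows immediately from the property of pull-back''): you identify the underlying $C^\infty$ bundle of $E_A$ as the pullback of $E$ along $X_A\to X$, collapse everything to $C^{\infty}(E)\otimes_{\mathbb{C}}A$ using that $A$ is a finite-dimensional local $\mathbb{C}$-algebra supported at a point, and transport $\bar\partial_{\mathcal{E}}=\bar\partial_0+\tilde\zeta$ by naturality of pullback. This matches the paper's (implicit) reasoning, merely with the formal details filled in.
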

As $C^{\infty}$-vector bundles over $X$, $C^{\infty}(\mathcal{E})\otimes_{\mathcal{O}_C} A\cong C^{\infty}(E)\otimes_{\mathbb{C}} A$ decomposes into direct sum of copies of $C^{\infty}(E)$. More precisely, we choose a basis $\{\alpha_\mu\}_{1\leq \mu\leq l}$ for $m$, the maximal ideal of $A$, as a vector space over $\mathbb{C}$, and we let $\alpha_0=1$ in $A$. Then, $C^{\infty}(\mathcal{E})\otimes_{\mathcal{O}_C} A$ is decomposed as
\begin{equation}
C^{\infty}(\mathcal{E})\otimes_{\mathcal{O}_C} A\cong\oplus_{0\leq \mu\leq l}C^\infty(E)\,\pmb{\alpha}_\mu\label{iso}
\end{equation}
where we use symbols $\pmb{\alpha}_\mu$ to distinguish different summands of $C^\infty(E)$. Moreover, the $A$-module structure on $\oplus_{0\leq \mu\leq l}C^\infty(E)\,\pmb{\alpha}_\mu$ inherited from (\ref{iso}) is characterized by $\pmb{\alpha}_\mu=\alpha_\mu\,\pmb{\alpha}_0$.

Given $f\in C^{\infty}(E)$, it extends to a section $\tilde{f}$ of $C^{\infty}(\mathcal{E})$ by parallel extension. For any $0\leq \mu\leq l$, $f\pmb{\alpha}_\mu$ corresponds to the section $\tilde{f}\otimes \alpha_\mu$ in $C^{\infty}(\mathcal{E})\otimes_{\mathcal{O}_C} A$ via the isomorphism (\ref{iso}). Since multiplication by $\alpha_\mu$ preserves the holomorphic structure $\bar{\partial}_A$, we have
\begin{equation}
\bar{\partial}_A(f\pmb{\alpha}_\mu)=\bar{\partial}_A(\tilde{f}\otimes \alpha_\mu)=\bar{\partial}(\tilde{f})\otimes \alpha_\mu\label{equal}
\end{equation}
again via isomorphism (\ref{iso}).

Recall that $C$ is the cone in $H^1(X, End(E))$, defined as the kernel of the quadratic map $H^1(X, End(E))\to H^2(X, End(E))$. Denote the dimension of $H^1(X, End(E))$ by $q$. Let $\{\psi_\nu\}, 1\leq \nu \leq q$ be a basis of $\mathcal{H}^{0,1}(End(E))$. Then their cohomology classes $[\psi_\nu]_{1\leq \nu\leq q}$ form a basis of $H^1(X, End(E))$. Denote the coordinate functions on $H^1(X, End(E))$ with respect to the basis $\{[\psi_\nu]\}$ by $x_\nu$. Now, we can express the map $s^*: \Gamma(\mathcal{O}_C)\to A$ by $s^*(x_\nu)=\sum_{1\leq \mu'\leq l} a^{\mu'}_\nu\alpha_{\mu'}$, where $a^{\mu'}_\nu\in \mathbb{C}$.

According to the discussion about the holomorphic structure on the Kuranishi family in the paragraph preceding Lemma \ref{cinfty}, one has the following equalities,
\begin{equation*}
  \begin{split}
   \bar{\partial}(\tilde{f})\otimes \alpha_\mu&=\sum_\nu x_\nu\; \widetilde{\psi_\nu(f)}\otimes \alpha_\mu\\
   &=\sum_{\mu', \nu}\widetilde{\psi_\nu(f)}\otimes (a^{\mu'}_\nu \alpha_{\mu'})\alpha_\mu\\
   &=\sum_{\mu', \nu}a_\nu^{\mu'}\psi_\nu(f)\alpha_{\mu'}\alpha_\mu\pmb{\alpha}_0\\
   &=\sum_{\mu', \nu}a_\nu^{\mu'}\psi_\nu(f)\alpha_{\mu'}\pmb{\alpha}_\mu
  \end{split}
\end{equation*}
where $\widetilde{\psi_\nu(f)}$ is the parallel extension of $\psi_\nu(f)$ on $E$ to $\mathcal{E}$, or in other words, the pull back of $\psi_\nu(f)$ by the projection $X\times C\to X$, and the second last equality is via isomorphism (\ref{iso}). Therefore, combining this with equation (\ref{equal}), we have the following formula for the holomorphic structure $\bar{\partial}_A$.

\begin{prop}\label{cs}The holomorphic structure $\bar\partial_A$ on $E_A$ is given by
\begin{equation*}
\bar{\partial}_A(f\pmb{\alpha}_\mu)=\bar\partial(f)\pmb\alpha_\mu+\sum_{\substack{1\leq\mu'\leq l\\ 1\leq \nu\leq q}}a_\nu^{\mu'}\psi_\nu(f)\;\alpha_{\mu'}\pmb{\alpha}_\mu.
\end{equation*}
\end{prop}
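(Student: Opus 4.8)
The statement is the culmination of the explicit computation just carried out, so the plan is to assemble the pieces rather than to prove something genuinely new. First I would fix the two reductions that make the formula a fiberwise statement. By Lemma \ref{cinfty} the holomorphic structure $\bar{\partial}_A$ on $E_A$ is the one induced from $\bar{\partial}_{\mathcal{E}}$ through the identification $C^{\infty}(E_A)=C^{\infty}(\mathcal{E})\otimes_{\mathcal{O}_C}A$, and under the decomposition (\ref{iso}) every $C^{\infty}$ section is an $A$-combination of the summands $C^{\infty}(E)\,\pmb{\alpha}_\mu$. Because multiplication by each $\alpha_\mu\in A$ is $\mathcal{O}_C$-linear and hence commutes with $\bar{\partial}_A$, it suffices to compute $\bar{\partial}_A$ on a single section $f\pmb{\alpha}_\mu=\tilde{f}\otimes\alpha_\mu$, and equation (\ref{equal}) already reduces this to evaluating $\bar{\partial}_{\mathcal{E}}(\tilde{f})$ and tensoring with $\alpha_\mu$.

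Second, I would insert Nadel's description of the Kuranishi holomorphic structure, $\bar{\partial}_{\mathcal{E}}=\bar{\partial}_0+\tilde{\zeta}$, and compute the two summands separately on the parallel extension $\tilde{f}$ of $f$. For the pullback part, since $\tilde{f}$ is constant in the $C$-directions, $\bar{\partial}_0\tilde{f}$ is simply the parallel extension $\widetilde{\bar{\partial}f}$; this is the term that, after tensoring with $\alpha_\mu$ and transporting through (\ref{iso}), produces the summand $\bar{\partial}(f)\pmb{\alpha}_\mu$. For the deformation part I would write the tautological section in the chosen coordinates as $\tilde{\zeta}=\sum_\nu x_\nu\psi_\nu$, so that its action on $\tilde{f}$ is $\sum_\nu x_\nu\,\widetilde{\psi_\nu(f)}$, which is exactly the starting line of the displayed chain of equalities preceding the statement.

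The remaining work is purely bookkeeping across the three structures in play --- the $C^{\infty}$ tensor product, the $A$-module relations $\pmb{\alpha}_\mu=\alpha_\mu\pmb{\alpha}_0$, and the pullback $s^*$. I would apply $s^*$ to the coordinate functions, substituting $s^*(x_\nu)=\sum_{\mu'}a^{\mu'}_\nu\alpha_{\mu'}$, and then repeatedly use $\pmb{\alpha}_\mu=\alpha_\mu\pmb{\alpha}_0$ together with the compatibility of (\ref{iso}) with the $A$-action to move the scalars $a^{\mu'}_\nu\in\mathbb{C}$ and the ring elements $\alpha_{\mu'}$ into the correct summand, arriving at $\sum_{\mu',\nu}a^{\mu'}_\nu\psi_\nu(f)\,\alpha_{\mu'}\pmb{\alpha}_\mu$. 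Combining this with the $\bar{\partial}(f)\pmb{\alpha}_\mu$ term from the pullback part of $\bar{\partial}_{\mathcal{E}}$ yields the asserted formula for $\bar{\partial}_A(f\pmb{\alpha}_\mu)$.

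The one genuinely substantive point --- the place I expect to need care rather than routine index-chasing --- is the claim that $\bar{\partial}_0\tilde{f}=\widetilde{\bar{\partial}f}$ with no cross terms, i.e. that the parallel extension is genuinely annihilated in the $C$-directions by the pullback structure $\bar{\partial}_0$, and, correspondingly, that $\tilde{\zeta}$ really acts fiberwise at $[\phi]$ as multiplication by $\phi$. Both facts are immediate from $\mathcal{E}\cong p_1^*(E)$ as $C^{\infty}$ bundles with $\bar{\partial}_{\mathcal{E}}=\bar{\partial}_0+\tilde{\zeta}$, but they are precisely the hypotheses that make the whole computation fiberwise, so I would state them explicitly before carrying out the substitutions. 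Once they are in place the formula in Proposition \ref{cs} follows by collecting the two contributions.
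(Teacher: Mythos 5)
Your proposal is correct and follows essentially the same route as the paper: reduce via Lemma \ref{cinfty} and equation (\ref{equal}) to computing $\bar{\partial}_{\mathcal{E}}(\tilde{f})\otimes\alpha_\mu$, substitute Nadel's description $\bar{\partial}_0+\tilde{\zeta}$ with $\tilde{\zeta}=\sum_\nu x_\nu\psi_\nu$, apply $s^*(x_\nu)=\sum_{\mu'}a^{\mu'}_\nu\alpha_{\mu'}$, and shuffle the $\alpha$'s through the relation $\pmb{\alpha}_\mu=\alpha_\mu\pmb{\alpha}_0$. If anything, you are slightly more explicit than the paper in isolating the $\bar{\partial}_0\tilde{f}=\widetilde{\bar{\partial}f}$ contribution, which the paper's displayed chain of equalities suppresses before restoring it in the final formula.
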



Under the same notations, we define the subspace of $H^1(X, End(E))$ spanned by $[\sum_{\nu}a_\nu^{\mu'}\psi_\nu]_{1\leq \mu' \leq l}$ to be the \textbf{vector space of derivatives} of $E_A$, denoted by $D(E_A)$. Notice that the dual vector space $H^1(X, End(E))^{\vee}$ is contained in the ring $\Gamma(\mathcal{O}_C)$. Therefore, $s^*: \Gamma(\mathcal{O}_C)\to A$ restricts to a linear map $g: H^1(X, End(E))^{\vee}\to m$, since $s^*$ sends the maximal ideal at origin to $m$. Thus we get $g^{\vee}: m^{\vee}\to H^1(X, End(E))$. By definition, one can easily check that $D(E_A)=Im(g^{\vee})$, which doesn't depend on the choices of $x_{\nu}$ or $\alpha_{\mu'}$.

The cup product map, composing with $End(E)\times E\to E$, defines naturally a bilinear map
$$H^1(X, End(E))\times H^\bullet(X, E)\to H^{\bullet+1}(X, E)$$
which, from now on, will be simply called the ``cup product" map.

\begin{prop}\label{main}
Suppose $dim_{\mathbb{C}}H^i(X, E)=k$. Then the following statements are equivalent, 
\begin{enumerate}
\item[\textup{(i)}] $H^i(X, E_A)$ is a free $A$-module of rank $k$,

\item[\textup{(ii)}] under the cup product maps $H^1(X, End(E))\times H^i(X, E)\to H^{i+1}(X, E)$ and $H^1(X, End(E))\times H^{i-1}(X, E)\to H^i(X, E)$, 
$D(E_A)$ is contained in the annihilators of $H^i(X, E)$ and $H^{i-1}(X, E)$ respectively. In other words, the images of $D(E_A)\times H^i(X, E)$ and $D(E_A)\times H^{i-1}(X, E)$ are both zero.

\item[\textup{(iii)}] let $\xi_\lambda, 1\leq\lambda\leq k$ be a basis of $E$-valued harmonic $(0, i)$-forms. Then $E_A$-valued $(0, i)$-forms $\xi_\lambda\pmb\alpha_\mu$ are $\bar\partial_A$-closed and $[\xi_\lambda\pmb\alpha_\mu]_{1\leq \lambda\leq k, \,0\leq \mu\leq l}$ forms a basis of $H^i(X, E_A)$. Similarly, let $\zeta$ be any $E$-valued harmonic $(0, i-1)$-form. Then $\zeta\pmb\alpha_\mu$ is $\bar\partial_A$-closed and the cohomology classes $[\zeta \pmb\alpha_\mu]_{0\leq \mu\leq l, \,\zeta\in \mathcal{H}^{0, i-1}(E)}$ span $H^{i-1}(X, E_A)$. 
\end{enumerate}
\end{prop}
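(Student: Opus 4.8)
The plan is to compute $H^i(X,E_A)$ directly from the Dolbeault complex furnished by Lemma~\ref{cinfty} and Proposition~\ref{cs}, and to show that after passing to harmonic representatives this complex collapses to cup product with the class of $\theta:=\sum_{\mu',\nu}a_\nu^{\mu'}\psi_\nu\,\alpha_{\mu'}$. Writing an $E_A$-valued $(0,j)$-form as $\sum_\mu\omega_\mu\pmb\alpha_\mu$ with each $\omega_\mu$ an $E$-valued $(0,j)$-form, Proposition~\ref{cs} and the Leibniz rule identify $\bar\partial_A$ with $\bar\partial+\theta\wedge(\,\cdot\,)$, where $\theta$ is regarded as an $End(E)$-valued $(0,1)$-form with coefficients in the maximal ideal $m$ and the wedge uses $End(E)\times E\to E$. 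Thus $H^i(X,E_A)$ is the $i$-th cohomology of the complex $\bigl(\mathcal{A}^{0,\bullet}(E)\otimes_\mathbb{C}A,\ \bar\partial+\theta\bigr)$ of smooth $E$-valued forms with coefficients in $A$, and the classes of $\theta_{\mu'}:=\sum_\nu a_\nu^{\mu'}\psi_\nu$ are exactly the spanning set of $D(E_A)$.

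First I would extend Nadel's Lemma~\ref{nadel} from the algebra $End(E)$ to the module $E$: the wedge of a harmonic form in $\mathcal{H}^{0,1}(End(E))$ with a harmonic form in $\mathcal{H}^{0,j}(E)$ is again harmonic, and under the harmonic identification it represents the cup product $H^1(X,End(E))\times H^j(X,E)\to H^{j+1}(X,E)$. This rests on the same input used by Nadel: the Hermitian--Einstein metric of the stable bundle $E$ has vanishing curvature, so $E$ with its flat harmonic metric is a harmonic bundle and the K\"ahler-type identities force products of harmonic forms to stay harmonic. I expect this formality statement to be the main obstacle, since every step downstream depends on it.

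Granting formality, the Hodge decomposition $\mathrm{id}=\mathbb{H}+\bar\partial\bar\partial^*G+\bar\partial^*\bar\partial G$ on $\mathcal{A}^{0,\bullet}(E)$ exhibits a contraction onto the harmonic forms $(\mathcal{H}^{0,\bullet}(E),0)$ with homotopy $h=\bar\partial^*G$. I would perturb $\bar\partial$ by $\theta$, whose $A$-coefficients lie in the nilpotent ideal $m$ so that all the series involved terminate, and transfer the differential to the harmonic subcomplex. Because $\theta\wedge\xi$ is harmonic whenever $\xi$ is harmonic, the Green operator annihilates it, so $h\theta\iota=0$ and all higher correction terms in the transferred differential $\sum_{n\ge1}\mathbb{H}\theta(h\theta)^{n-1}\iota$ vanish; the transferred differential is exactly $\mathbb{H}\circ(\theta\wedge\,\cdot\,)=\theta\cup$. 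This gives a canonical isomorphism $H^i(X,E_A)\cong H^i\bigl(\mathcal{H}^{0,\bullet}(E)\otimes_\mathbb{C}A,\ \theta\cup\bigr)$, with every cohomology class represented by harmonic forms $\sum\xi\,\pmb\alpha_\mu$.

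The three equivalences then follow by linear algebra over $A$. The map $(\theta\cup)^j\colon\mathcal{H}^{0,j}(E)\otimes A\to\mathcal{H}^{0,j+1}(E)\otimes A$ sends $\xi\otimes a\mapsto\sum_{\mu'}(\theta_{\mu'}\wedge\xi)\otimes\alpha_{\mu'}a$, and since each $\theta_{\mu'}\wedge\xi$ is harmonic it is zero if and only if $[\theta_{\mu'}]\cup[\xi]=0$; hence the vanishing of both $(\theta\cup)^i$ and $(\theta\cup)^{i-1}$ is precisely condition (ii). If (ii) holds, then $\ker(\theta\cup)^i=\mathcal{H}^{0,i}(E)\otimes A$ and $\mathrm{im}(\theta\cup)^{i-1}=0$, so $H^i(X,E_A)=\mathcal{H}^{0,i}(E)\otimes A$ is free of rank $k$ with basis $\{\xi_\lambda\pmb\alpha_\mu\}$, while $\ker(\theta\cup)^{i-1}=\mathcal{H}^{0,i-1}(E)\otimes A$ surjects onto $H^{i-1}(X,E_A)$, giving the spanning set $\{\zeta\pmb\alpha_\mu\}$; this is (i) and (iii). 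Conversely, from $\dim_\mathbb{C}H^i(X,E_A)=k\dim_\mathbb{C}A-\dim_\mathbb{C}\mathrm{im}(\theta\cup)^i-\dim_\mathbb{C}\mathrm{im}(\theta\cup)^{i-1}$ and $\dim_\mathbb{C}\bigl(\mathcal{H}^{0,i}(E)\otimes A\bigr)=k\dim_\mathbb{C}A$, freeness of rank $k$ in (i) forces both images to vanish, which is (ii). Finally (iii)$\Rightarrow$(i) is immediate, since then $\{\xi_\lambda\pmb\alpha_0\}_{1\le\lambda\le k}$ generate an $A$-module of $\mathbb{C}$-dimension $k\dim_\mathbb{C}A$, necessarily free of rank $k$.
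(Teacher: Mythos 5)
Your proposal is correct, but it follows a genuinely different route from the paper. The paper never passes to the full Dolbeault complex with the perturbation lemma; instead it filters $m=m_0\supset\cdots\supset m_l=0$ with one-dimensional graded pieces, inducts on $\dim_{\mathbb C}A$, identifies the extension class of each step $0\to E\to E_{j+1}\to E_j\to 0$ as $\sum_{h,\nu}a^{h+1}_\nu\psi_\nu\pmb\beta_h$ (Lemma~\ref{ec}), and reads off (i)$\Leftrightarrow$(ii)$\Leftrightarrow$(iii) from vanishing of the boundary maps in the resulting long exact sequences together with dimension counts. Your homotopy-transfer argument replaces all of that by a single formality statement: since $\theta$ has coefficients in the nilpotent ideal $m$, the perturbation series terminates, and since $\theta_{\mu'}\wedge\xi$ is harmonic whenever $\xi$ is, the homotopy $h=\bar\partial^*G$ kills $\theta\iota$, so the transferred differential on $\mathcal H^{0,\bullet}(E)\otimes A$ is exactly $\theta\cup$; the three equivalences then become transparent linear algebra (your dimension count for (i)$\Rightarrow$(ii) is in fact cleaner than the paper's inductive surjectivity argument). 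The ingredient you rightly flag as the main obstacle --- harmonicity of $\theta_{\mu'}\wedge\xi$ for the cup product $H^1(X,End(E))\times H^j(X,E)\to H^{j+1}(X,E)$ --- is precisely the paper's Lemma~\ref{hf}, proved by conjugating harmonic $(0,q)$-forms to holomorphic $q$-forms with respect to the flat Hermitian--Einstein metrics; so your plan closes. Two small points you should make explicit: the integrability $(\bar\partial+\theta)^2=0$ (equivalently $\theta\wedge\theta=0$ exactly, not just in cohomology) follows because $s$ factors through the quadratic cone $C$ and $\theta\wedge\theta$ is harmonic with vanishing class by Lemma~\ref{nadel}; and the Dolbeault resolution of $E_A$ with nilpotent coefficients still computes $H^\bullet(X,E_A)$ as an $A$-module (filter by powers of $m$). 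What your approach buys is a canonical identification $H^\bullet(X,E_A)\cong H^\bullet(\mathcal H^{0,\bullet}(E)\otimes A,\theta\cup)$ in one stroke --- essentially the resonance/Aomoto-complex picture of Dimca--Papadima --- at the cost of invoking the perturbation lemma and its side conditions; the paper's induction is more elementary and tracks each one-dimensional extension explicitly.
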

Before proving this Proposition, we need a few lemmas.

Since $E$ is stable and of vanishing chern classes, there exists a Hermitian-Einstein metric $h$ on $E$, whose curvature is zero. This metric $h$ induces Hermitian-Einstein metrics on $End(E)=E\otimes E^{\vee}$ and $End(E)\otimes E$ respectively. Fixing these metrics on $End(E)$ and $End(E)\otimes E$ respectively, we have the following lemma, which is very similar to Lemma 3.1 of \cite{gl2} and the statement (*) after Theorem 1.1 of \cite{n}.

\begin{lemma}\label{hf}
Let $\theta$ be an $End(E)$-valued harmonic $(0, 1)$-form and let $\xi$ be an $E$-valued harmonic $(0, i)$-form. Under the natural contraction map $End(E)\otimes E\to E$, we can compose them to a $E$-valued $(0, i+1)$-form, which we write as $\theta(\xi)$. Then $\theta(\xi)$ is also harmonic.
\end{lemma}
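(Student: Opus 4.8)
The plan is to exploit the vanishing of the curvature, which turns every Chern connection in sight into a flat connection and lets me replace $\bar\partial$-harmonicity by the much more symmetric $d$-harmonicity. Write $D_E=\partial_E+\bar\partial_E$ for the Chern connection of $(E,h)$, and likewise $D_{End(E)}$ and $D_{End(E)\otimes E}$ for the induced connections; since the curvature of $D_E$ is zero, all of these are flat. On a compact K\"ahler manifold the Bochner--Kodaira--Nakano identities for the Chern connection give $\Delta_{\bar\partial}=\Delta_\partial+[i\Theta,\Lambda]$, and $\Delta_d$ differs from $\Delta_\partial+\Delta_{\bar\partial}$ only by curvature terms, so when the curvature $\Theta$ vanishes they collapse to $\Delta_{\bar\partial}=\Delta_\partial=\tfrac12\Delta_d$. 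I would use this to record the crucial fact: a $\bar\partial$-harmonic form $\omega$ (in any of these bundles) is $d$-harmonic, hence $d\omega=0$ and $d^*\omega=0$, and by comparing bidegrees this is equivalent to $\partial_E\omega=\bar\partial_E\omega=\partial_E^*\omega=\bar\partial_E^*\omega=0$. In particular $\theta$ and $\xi$ are simultaneously $\partial$-closed and $\bar\partial$-closed.

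Next I would check that the natural contraction $\mu\colon End(E)\otimes E\to E$ is parallel for the Chern connections, i.e. compatible with both $\partial$ and $\bar\partial$; this holds because the Chern connection on a tensor/hom bundle is the induced one, and $\mu$ is a holomorphic, metric-compatible bundle map, hence flat for $D_{End(E)\otimes E}$ and $D_E$. With this, the Leibniz rule gives $\bar\partial_E(\theta(\xi))=(\bar\partial_{End(E)}\theta)(\xi)-\theta(\bar\partial_E\xi)$ and $\partial_E(\theta(\xi))=(\partial_{End(E)}\theta)(\xi)-\theta(\partial_E\xi)$. By the previous paragraph every term on the right vanishes, so $\theta(\xi)$ is both $\bar\partial_E$-closed and $\partial_E$-closed.

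Finally I would convert the $\partial$-closedness into the missing co-closedness by a single K\"ahler identity. Since $\theta(\xi)$ is a $(0,i+1)$-form we have $\Lambda(\theta(\xi))=0$, so the (untwisted-curvature-free) identity $\bar\partial_E^*=-i[\Lambda,\partial_E]$ yields $\bar\partial_E^*(\theta(\xi))=-i\,\Lambda\,\partial_E(\theta(\xi))=0$. Together with $\bar\partial_E(\theta(\xi))=0$ this shows $\theta(\xi)\in\ker\Delta_{\bar\partial}$, i.e. it is harmonic, which is the assertion.

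I expect the only real subtlety to lie in the first step: justifying $\Delta_{\bar\partial}=\tfrac12\Delta_d$ for the bundle-valued forms, namely that the curvature corrections in the Bochner--Kodaira--Nakano identities all drop out, and checking the K\"ahler identity $\bar\partial_E^*=-i[\Lambda,\partial_E]$ in the twisted setting with the correct signs. Both are standard once the flatness of the induced connections is established, and everything after that is a one-line application of the Leibniz rule and a single commutation relation. This is, incidentally, exactly the mechanism behind Nadel's Lemma~\ref{nadel} (the case of the product $End(E)\otimes End(E)\to End(E)$), and the present argument is identical apart from replacing one of the factors by $E$.
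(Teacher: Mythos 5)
Your proof is correct, but it runs along a different track from the one in the paper. The paper's proof never invokes the K\"ahler identities explicitly: it checks that the contraction $End(E)\otimes E\to E$ commutes with conjugation relative to the Hermitian--Einstein metrics (being a projection onto a metric direct summand), uses flatness to identify $\partial$-harmonic with $\bar\partial$-harmonic so that a form is harmonic iff its conjugate is, and then conjugates $\theta$ and $\xi$ into \emph{holomorphic} bundle-valued forms $\bar\theta$, $\bar\xi$ of types $(1,0)$ and $(i,0)$; the product of holomorphic forms is holomorphic, hence harmonic, and conjugating back gives the claim. You instead work directly on the $(0,\bullet)$ side: flatness gives $\Delta_{\bar\partial}=\Delta_\partial$, so a harmonic form is killed by all four of $\partial,\bar\partial,\partial^*,\bar\partial^*$; the Leibniz rule (using that the contraction is parallel) shows $\theta(\xi)$ is $\partial$- and $\bar\partial$-closed; and the single commutation relation $\bar\partial^*=-i[\Lambda,\partial]$ together with $\Lambda(\theta(\xi))=0$ for bidegree reasons yields $\bar\partial^*$-closedness. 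Each step checks out, including the vanishing of the cross terms in $\Delta_d$ (which uses only that the Chern curvature has type $(1,1)$, hence $\partial^2=0$). One reassurance on the point you flag as the ``only real subtlety'': the Nakano identity $\bar\partial^*=-i[\Lambda,\partial]$ for bundle-valued forms on a K\"ahler manifold holds for an arbitrary Chern connection with no flatness hypothesis --- curvature enters only in the Bochner--Kodaira--Nakano comparison $\Delta_{\bar\partial}=\Delta_\partial+[i\Theta,\Lambda]$, which is exactly where you use it. What your route buys is that you never have to discuss conjugation or verify that the contraction is compatible with it (a point the paper spends a paragraph on); what the paper's route buys is a one-line conceptual core (``products of holomorphic forms are holomorphic'') that matches the original arguments of Green--Lazarsfeld and Nadel which it is citing.
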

\begin{proof}
The contraction map $\tau: End(E)\otimes E\to E$ is canonically isomorphic to the tensor product of $E^{\vee}\otimes E\to \mathcal(O)_X$ and $E$. Since $\mathcal{O}_X$ is a direct summand of $E^\vee\otimes E$ and the direct sum respects the Hermitian-Einstein metrics, $\tau: End(E)\otimes E\to E$ is projection to a direct summand in the category of Hermitian-Einstein vector bundles. Therefore, $\tau: End(E)\otimes E\to E$ is compatible with taking conjugation with respect to the Hermitian-Einstein metrics. In other words, for any $End(E)\otimes E$-valued form $\zeta$, $\widebar{\tau(\zeta)}=\tau(\bar{\zeta})$, where the bars are taking conjugation with respect to the fixed Hermitian-Einstein metrics on $End(E)\otimes E$ and $E$ respectively.

On a Hermitian-Einstein vector bundle, whose curvature is zero, the notion of $\partial$ harmonic and $\bar{\partial}$ harmonic are equivalent due to Hodge theory. Moreover, since star operator commutes with taking conjugate, a bundle valued form $\rho$ is $\partial$ harmonic $\Leftrightarrow \rho$ and $*\rho$ are $\partial$-closed $\Leftrightarrow$ $\bar{\rho}$ and $*\bar{\rho}$ are $\bar\partial$-closed $\Leftrightarrow$ $\bar\rho$ is $\bar\partial$ harmonic. Therefore, an $E$-valued (or $End(E)$-valued) form  is harmonic if and only if its conjugation is harmonic. 

Now $\theta$ is a harmonic $End(E)$-valued $(0, 1)$-form and $\xi$ is a harmonic $E$-valued $(0, i)$-form $\Rightarrow$ $\bar\theta$ is a holomorphic $End(E)$-valued 1-form and $\bar\xi$ is a  holomorphic $E$-valued $i$-form $\Rightarrow$ $\bar\theta(\bar\xi)=\widebar{\theta(\xi)}$ is a holomorphic $E$-valued $(i+1)$-form $\Rightarrow$ $\theta(\xi)$ is a harmonic $E$-valued $(0, i+1)$-form.
\end{proof}

We can always filter the maximal ideal $m$ of $A$ by ideals $m=m_0\supset m_1\supset \cdots \supset m_{l-1}\supset m_{l}=0$, such that $m_j/m_{j+1}\cong \mathbb{C}$. Given the filtration, we can choose the basis $\{\alpha_j\}_{1\leq j\leq l}$ of $m$ compatible with the filtration, that is, $m_j$ is spanned by $\alpha_{j+1}, \alpha_{j+2}, \ldots, \alpha_{l}$ as a $\mathbb{C}$ vector space. Denote the quotient ring $A/m_j$ by $A_j$ for $0\leq j\leq l$. Considering $E_A$ as a sheaf of $A$-modules on $X$, we denote $E_A\otimes_A A_j$ by $E_j$. For example, $E=E_0$ and $E_l=E_A$. By taking tensor product of $E_A$ and $0\to \mathbb{C}\to A_{j+1}\to A_{j}\to 0$ over $A$, we obtain a short exact sequence $0\to E\to E_{j+1}\to E_{j}\to 0$ for each $0\leq j\leq l-1$. 

As $C^\infty$ vector bundles, $E_{j}$ is a direct sum of $j+1$ copies of $E$. Let $E_{j}= E\bar{\pmb\alpha}_0\oplus E\bar{\pmb{\alpha}}_1\oplus\cdots \oplus E\bar{\pmb\alpha}_{j}$ be the $C^\infty$ decomposition which is compatible with the decomposition $E_A=E\pmb\alpha_0\oplus\cdots\oplus E\pmb\alpha_l$. More precisely, we require that under the projection map $E_A\to E_{j}$, symbol $\pmb\alpha_h$ maps to $\bar{\pmb\alpha}_h$ if $0\leq h\leq j$, and zero otherwise. Let the corresponding $C^\infty$ decomposition of vector bundle $\mathscr{H}om(E_{j}, E)$ be $\mathscr{H}om(E_{j}, E)= End(E)\pmb\beta_0\oplus\cdots \oplus End(E)\pmb\beta_{j}$. By Proposition \ref{cs}, under the same notation, the holomorphic structure $\bar\partial$ on $\mathscr Hom(E_j, E)$ is given by

\begin{equation}\label{cs2}
\bar\partial(g\pmb\beta_h)=\bar\partial(g)\pmb\beta_h+\sum_{\substack{1\leq\mu\leq j\\1\leq\nu\leq q}}a^{\mu}_{\nu}\psi_\nu(g)\alpha_\mu\pmb\beta_h
\end{equation}

where $g$ is any $C^\infty$ section of $End(E)$ and $\psi_\nu(\cdot)$ is defined to be mutiplication by $\psi_\nu$ on right. Since $\mathscr Hom(E_j, E)$ has an $A$-module structure induced from the one on $E_j$, multiplication by $\alpha_\mu$ is well defined.

\begin{lemma}\label{ec}
The extension class of the short exact sequence $0\to E\to E_{j+1}\to E_j\to 0$ is represented by the $\mathscr Hom(E_j, E)$-valued 1-form 
$$\sum_{0\leq h\leq j}\sum_{1\leq \nu\leq q}a^{h+1}_\nu \psi_\nu \pmb\beta_h.$$
\end{lemma}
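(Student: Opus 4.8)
The plan is to compute the extension class directly in the Dolbeault model, using the explicit holomorphic structure recorded in Proposition \ref{cs}. First I would recall the standard description of the extension class of a $C^\infty$-split short exact sequence of holomorphic bundles. For $0\to E\xrightarrow{\iota} E_{j+1}\xrightarrow{\pi} E_j\to 0$ one picks any $C^\infty$ splitting $\sigma\colon E_j\to E_{j+1}$ of $\pi$ and forms $\beta:=\bar\partial_{E_{j+1}}\circ\sigma-\sigma\circ\bar\partial_{E_j}$. Since $\pi$ is holomorphic, $\pi\circ\beta=0$, so $\beta$ takes values in $\iota(E)$ and defines a $(0,1)$-form with coefficients in $\mathscr{H}om(E_j,E)$; the identity $\bar\partial_{E_{j+1}}^2=0$ forces $\bar\partial\beta=0$, and the class $[\beta]$ in $H^1(X,\mathscr{H}om(E_j,E))$ is independent of $\sigma$. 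Having recorded this, it suffices to evaluate $\beta$ for one convenient $\sigma$.

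The second step is to choose $\sigma$ adapted to the decompositions at hand. Writing $E_{j+1}=\bigoplus_{0\le h\le j+1}E\bar{\pmb\alpha}_h$ and $E_j=\bigoplus_{0\le h\le j}E\bar{\pmb\alpha}_h$, I take $\sigma$ to be the inclusion of the first $j+1$ summands, and I note that $\iota$ identifies $E$ with the summand $E\bar{\pmb\alpha}_{j+1}$: indeed $\iota$ is multiplication by $\alpha_{j+1}$, which is well defined modulo $m_{j+1}$ precisely because $m\,m_j\subseteq m_{j+1}$ (a consequence of $m_j/m_{j+1}\cong\mathbb{C}$). With this choice the components of $\bar\partial_{E_{j+1}}\circ\sigma$ and of $\sigma\circ\bar\partial_{E_j}$ in $\bar{\pmb\alpha}_0,\dots,\bar{\pmb\alpha}_j$ coincide (again because $\pi$ is holomorphic), so $\beta$ is exactly the $\bar{\pmb\alpha}_{j+1}$-component of $\bar\partial_{E_{j+1}}\circ\sigma$. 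Applying Proposition \ref{cs} to $E_{j+1}=E_A\otimes_A A_{j+1}$ gives, for a section $f\bar{\pmb\alpha}_h$ with $0\le h\le j$, the expression $\bar\partial_{E_{j+1}}(f\bar{\pmb\alpha}_h)=\bar\partial(f)\bar{\pmb\alpha}_h+\sum_{\mu',\nu}a^{\mu'}_\nu\,\psi_\nu(f)\,\alpha_{\mu'}\bar{\pmb\alpha}_h$, and I would isolate its $\bar{\pmb\alpha}_{j+1}$-part using $\alpha_{\mu'}\bar{\pmb\alpha}_h=\alpha_{\mu'}\alpha_h\bar{\pmb\alpha}_0$ followed by reduction modulo $m_{j+1}$.

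Reading off this top component and matching it against the labelled summands $\pmb\beta_h$ of the $C^\infty$ decomposition of $\mathscr{H}om(E_j,E)$ produces the asserted form $\sum_{0\le h\le j}\sum_{1\le\nu\le q}a^{h+1}_\nu\,\psi_\nu\,\pmb\beta_h$, and its $\bar\partial$-closedness is automatic from the construction. The main obstacle is entirely the combinatorial bookkeeping of the $A$-module structure: before the coefficient of $\bar{\pmb\alpha}_{j+1}$ can be extracted one must re-expand $\alpha_{\mu'}\alpha_h$ in the chosen basis $\{\alpha_k\}$, and one must pin down the indexing convention relating $\pmb\beta_h$ to the summands $E\bar{\pmb\alpha}_h$ so that the surviving coefficients are recorded as $a^{h+1}_\nu$ and not some reordering of them. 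I would therefore fix this convention at the outset, consistently with formula (\ref{cs2}), and verify it on the cases $j=0$ and $j=1$, where the surviving terms can be computed by hand and compared directly with the claimed formula before passing to the general index chase.
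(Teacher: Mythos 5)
Your argument is essentially the paper's own proof: the $C^\infty$-splitting description of the extension class that you derive in your first step is exactly Lemma \ref{lec}, and the remainder is the same evaluation of $\bar\partial$ applied to the obvious splitting section using Proposition \ref{cs} and formula (\ref{cs2}). You also correctly isolate the only delicate point --- the re-expansion of $\alpha_{\mu'}\alpha_h$ in the chosen basis and the indexing convention for the symbols $\pmb\beta_h$ (equivalently, the $A$-module structure on $\mathscr Hom(E_j,E)$ implicit in (\ref{cs2})) --- which the paper's one-line proof leaves entirely to the reader.
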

\begin{proof}
It follows from next lemma and formula (\ref{cs2}) for the holomorphic structure of $\mathscr Hom(E_j, E)$.
\end{proof}

Given a short exact sequence of holomorphic vector bundles $0\to V'\to V\to V''\to 0$ on $X$,  let $\epsilon: V''\to V$ be a $C^\infty$ map which splits $0\to V'\to V\to V''\to 0$ in the category of $C^\infty$ bundles. Let $\bar\partial$ be the holomorphic structure on $\mathscr Hom(V'', V)$ induced from the ones on $V''$ and $V$. Considering $\epsilon$ as a $C^\infty$ section of $\mathscr Hom(V'', V)$, $\bar\partial(\epsilon)$ is a $\mathscr Hom(V'', V)$-valued 1-form. Under the induced map $\mathscr Hom(V'', V)\to \mathscr Hom(V'', V'')$ by $V\to V''$, the image of $\bar\partial(\epsilon)$ is zero, because the identity section in $\mathscr Hom(V'', V'')$ is holomorphic. Therefore, $\bar\partial(\epsilon)$ lifts to a $\mathscr Hom(V'', V')$-valued 1-form, which we denote by $\bar\partial(\epsilon)_0$. 

\begin{lemma}\label{lec}
$\bar\partial(\epsilon)_0$ is $\bar\partial$-closed, and it represents the extension class of \\
$0\to V'\to V\to V''\to 0$.
\end{lemma}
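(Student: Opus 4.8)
The plan is to carry out everything with Dolbeault representatives, using the identification $\mathrm{Ext}^1(V'', V') \cong H^1(X, \mathscr{Hom}(V'', V'))$ computed by $\bar\partial$-harmonic theory. I would first recall the standard Dolbeault description of the extension class: the $C^\infty$ splitting $\epsilon$ yields a $C^\infty$ isomorphism $V \cong \iota(V') \oplus \epsilon(V'')$, where $\iota\colon V' \hookrightarrow V$ is the inclusion, and in this decomposition the holomorphic structure on $V$ becomes upper triangular,
\[
\bar\partial_V = \begin{pmatrix} \bar\partial_{V'} & \beta \\ 0 & \bar\partial_{V''} \end{pmatrix},
\]
with $\beta$ a $\bar\partial$-closed $\mathscr{Hom}(V'', V')$-valued $(0,1)$-form whose class is, by definition, the extension class. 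With this in hand, the whole lemma reduces to the single identity $\beta = \bar\partial(\epsilon)_0$.

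For the closedness, I would argue via the injectivity of $\iota$. Since $\iota\colon V' \hookrightarrow V$ is holomorphic, the induced bundle map $\iota_*\colon \mathscr{Hom}(V'', V') \to \mathscr{Hom}(V'', V)$ is holomorphic and fiberwise injective, hence commutes with $\bar\partial$; and by the very construction of the lift, $\iota_*\bigl(\bar\partial(\epsilon)_0\bigr) = \bar\partial(\epsilon)$. Applying $\bar\partial$ and using $\bar\partial^2 = 0$ on $\mathscr{Hom}(V'', V)$ gives
\[
\iota_*\bigl(\bar\partial(\bar\partial(\epsilon)_0)\bigr) = \bar\partial\bigl(\iota_*(\bar\partial(\epsilon)_0)\bigr) = \bar\partial(\bar\partial(\epsilon)) = 0,
\]
and injectivity of $\iota_*$ forces $\bar\partial(\bar\partial(\epsilon)_0) = 0$, which is the first assertion.

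For the identification with the extension class, I would evaluate $\bar\partial_V$ on a section written in the splitting as $\iota(s') + \epsilon(s'')$. Holomorphicity of $\iota$ gives $\bar\partial_V(\iota(s')) = \iota(\bar\partial_{V'} s')$, while the Leibniz rule defining $\bar\partial$ on $\mathscr{Hom}(V'', V)$ gives $\bar\partial_V(\epsilon(s'')) = (\bar\partial \epsilon)(s'') + \epsilon(\bar\partial_{V''} s'')$. Since $\bar\partial(\epsilon) = \iota_*(\bar\partial(\epsilon)_0)$ takes values in $\iota(V')$, the first term equals $\iota\bigl(\bar\partial(\epsilon)_0(s'')\bigr)$; collecting the $\iota(V')$- and $\epsilon(V'')$-components then exhibits $\bar\partial_V$ as upper triangular with diagonal $(\bar\partial_{V'}, \bar\partial_{V''})$ and off-diagonal block precisely $\bar\partial(\epsilon)_0$. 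Thus $\beta = \bar\partial(\epsilon)_0$, proving the second assertion.

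The hard part will not be any computation but the bookkeeping of conventions: one must fix consistently the sign/Leibniz convention for $\bar\partial$ on the $\mathscr{Hom}$-bundle and the corresponding sign convention for the extension class, so that the off-diagonal block and $\bar\partial(\epsilon)_0$ match on the nose rather than up to sign. It is also worth noting for completeness that the construction is independent of the choice of splitting: two $C^\infty$ splittings differ by a section $\delta$ of $\mathscr{Hom}(V'', V')$, which changes $\bar\partial(\epsilon)_0$ by $\bar\partial \delta$, so the cohomology class is well defined. These points are routine, and the substance of the lemma is the two displayed identities above.
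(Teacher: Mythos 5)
Your argument is correct and lands on the same Dolbeault representative, but the identification step runs through a different standard characterization of the extension class than the paper's. The paper takes the Dolbeault resolution of $0\to \mathscr{Hom}(V'', V')\to \mathscr{Hom}(V'', V)\to \mathscr{Hom}(V'', V'')\to 0$ and computes the connecting homomorphism $\delta\colon H^0(X, \mathscr{Hom}(V'', V''))\to H^1(X, \mathscr{Hom}(V'', V'))$ on the identity section: $\epsilon$ is exactly a $C^\infty$ lift of $\mathrm{id}_{V''}$, so the snake-lemma recipe gives $\delta(\mathrm{id}_{V''})=[\bar\partial(\epsilon)_0]$, and $\delta(\mathrm{id}_{V''})$ is the extension class by definition. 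You instead put the holomorphic structure of $V$ in upper-triangular form relative to the $C^\infty$ splitting and identify the off-diagonal block with $\bar\partial(\epsilon)_0$, taking the class of that block as the definition of the extension class; your closedness argument via injectivity of $\iota_*$ and $\bar\partial^2=0$ is precisely the one hidden in the paper's ``standard arguments.'' The two characterizations are equivalent, and your route has the advantage of producing an explicit formula for $\bar\partial_V$; but note that in this paper the lemma is used (Lemmas \ref{ec} and \ref{bm}) to compute boundary maps of long exact sequences as cup product with the extension class, for which the connecting-homomorphism description is the one directly invoked, so if you start from the matrix description you should at least record that the two agree. Your remarks on sign conventions and on independence of the choice of splitting are correct but not needed for the statement.
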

\begin{proof}
Take the Dolbeault resolution of the short exact sequence 
$$0\to \mathscr Hom(V'', V')\to \mathscr Hom(V'', V)\to \mathscr Hom(V'', V'')\to 0.$$
Then the lemma follows from the standard arguments in sheaf cohomology.
\end{proof}

By taking quotient of the inclusion map $m_{j-1}\to A$ by $m_j$, we have a natural homomorphism $\mathbb{C}\to A_j$ of $A$-modules. Consider the map $E\to E_j$ induced by taking tensor product of $E_A$ and $\mathbb{C}\to A_j$ over $A$. Under the above notation, the map sends a section $f$ of $E$ to the section $f\bar{\pmb\alpha}_j$ of $E_j$. Therefore, the induced map on cohomology $H^{i-1}(X, E)\to H^{i-1}(X, E_j)$ sends $[\zeta]$ to $[\zeta\bar{\pmb\alpha}_j]$ for any harmonic form $\zeta\in \mathcal{H}^{0, i-1}(E)$. Composing with the boundary map $\delta:H^{i-1}(X, E_j)\to H^i(X, E)$, the image of $[\zeta]$ would be $[\sum_{1\leq \nu\leq q}a_\nu^{j+1}\psi_\nu(\zeta)]$ by Lemma \ref{ec}. To summarize, we have the following lemma,

\begin{lemma}\label{bm}
Under the composition $H^{i-1}(X, E)\to H^{i-1}(X, E_j)\to H^i(X, E)$, where the first map is induced by the map of vector bundles $E\to E_j$ and the second is the boundary map $\delta: H^{i-1}(X, E_j)\to H^i(X, E)$, the image of $[\zeta]$ is $[\sum_{1\leq \nu\leq q}a_\nu^{j+1}\psi_\nu(\zeta)]$. 
\end{lemma}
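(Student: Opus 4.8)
The plan is to analyze the two maps in the composition separately and then combine them. The first map $H^{i-1}(X,E)\to H^{i-1}(X,E_j)$ is induced by the holomorphic bundle map $E\to E_j$ obtained by tensoring the $A$-module homomorphism $\mathbb{C}\to A_j$, $1\mapsto\bar\alpha_j$, with $E_A$; as recorded in the paragraph above, it sends a section $f$ to $f\bar{\pmb\alpha}_j$ and is compatible with the $\bar\partial$-operators. I would therefore begin by noting that a harmonic representative $\zeta$ of a class in $H^{i-1}(X,E)$ is carried to the $\bar\partial_{E_j}$-closed form $\zeta\bar{\pmb\alpha}_j$, so that the image of $[\zeta]$ under the first map is $[\zeta\bar{\pmb\alpha}_j]$.

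The substantive step is to compute the boundary map $\delta: H^{i-1}(X,E_j)\to H^i(X,E)$ of the long exact sequence attached to $0\to E\to E_{j+1}\to E_j\to 0$. My plan is to use the standard identification of $\delta$ with the cup product against the extension class $e\in H^1(X,\mathscr{H}om(E_j,E))$ of this short exact sequence, where the pairing $H^{i-1}(X,E_j)\otimes H^1(X,\mathscr{H}om(E_j,E))\to H^i(X,E)$ is induced by the evaluation $\mathscr{H}om(E_j,E)\otimes E_j\to E$; concretely this identification is realized on forms through the Dolbeault description already established in Lemma \ref{lec}. By Lemma \ref{ec}, $e$ is represented by the $1$-form $\sum_{0\le h\le j}\sum_{1\le\nu\le q}a_\nu^{h+1}\psi_\nu\,\pmb\beta_h$. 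I would then cup this representative with $\zeta\bar{\pmb\alpha}_j$ and evaluate. The crucial observation is that, by our choice of compatible $C^\infty$ decompositions, $\pmb\beta_h$ pairs nontrivially only with $\bar{\pmb\alpha}_h$, so all terms with $h\neq j$ vanish and the product collapses to $\sum_{1\le\nu\le q}a_\nu^{j+1}\psi_\nu(\zeta)$.

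Finally, I would invoke Lemma \ref{hf}: each $\psi_\nu$ is a harmonic $End(E)$-valued $(0,1)$-form and $\zeta$ is a harmonic $E$-valued $(0,i-1)$-form, so every $\psi_\nu(\zeta)$ is harmonic, whence $\sum_\nu a_\nu^{j+1}\psi_\nu(\zeta)$ is harmonic and is its own harmonic representative. This yields the image class $[\sum_{1\le\nu\le q}a_\nu^{j+1}\psi_\nu(\zeta)]$, as asserted. The hard part is not any single computation but the careful identification of the connecting homomorphism with the cup product by the extension class, together with the bookkeeping of how $\pmb\beta_h$ pairs with $\bar{\pmb\alpha}_{h'}$, which isolates the single surviving summand; once these are in place the conclusion is immediate.
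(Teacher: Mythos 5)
Your proposal is correct and follows essentially the same route as the paper: the paper derives this lemma as a summary of the preceding paragraph, using the description $[\zeta]\mapsto[\zeta\bar{\pmb\alpha}_j]$ of the first map and then Lemma \ref{ec} together with the identification of the connecting homomorphism with cup product against the extension class, with only the $h=j$ summand of $\sum_{h}\sum_{\nu}a_\nu^{h+1}\psi_\nu\pmb\beta_h$ surviving. Your extra appeal to Lemma \ref{hf} for harmonicity of the representative is a harmless addition not needed for the statement itself.
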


\begin{proof}[Proof of Proposition \ref{main}]
We use induction on the dimension of $A$ as a complex vector space. In particular, we assume that the proposition is true for $E_{l-1}$. 

First, we show that $\text{(iii)}\Rightarrow \text{(i)}$. Since $C^\infty(E_A)$ has an $A$-module structure and multiplying by $\alpha_\mu\in A$ preserves the holomorphic structure, we have $\alpha_\mu[\xi_\lambda\pmb\alpha_0]=[\xi_\lambda \alpha_\mu\pmb\alpha_0]=[\xi_\lambda\pmb\alpha_\mu]$ in the $A$-module $H^i(X, E_A)$. Therefore, $H^i(X, E_A)$ is generated by $[\xi_\lambda\pmb\alpha_0]$, and according to (iii), there is no relation between the generators. Therefore, $H^i(X, E_A)$ is free of rank $k$.

Next, we show that $\text{(i)}\Rightarrow \text{(ii)}$. Assuming (i), we know that the dimension of $H^i(X, E_A)$ is $(l+1)k$. From the long exact sequence associated to $0\to E\to E_{j+1}\to E_j\to 0$, we have $\dim H^i(X, E_{j+1})\leq \dim H^i(X, E_j)+\dim H^i(X, E)$ for all $0\leq j\leq l-1$. Therefore, $\dim H^i(X, E_A)=(l+1)k$ implies that the equality holds for every $0\leq j \leq l-1$. Thus, in particular,  the map $H^i(X, E_A)\to H^i(X, E_{l-1})$ is surjective. Since $H^i(X, E_A)$ is a free $A$-module of rank $k$, $H^i(X, E_{l-1})$ as an $A_{l-1}$-module is generated by $k$ elements. On the other hand, $\dim H^i(X, E_A)=\dim H^i(X, E_{l-1})+\dim H^i(X, E)$, and hence $\dim H^i(X, E_{l-1})=lk$. Therefore, $H^i(X, E_{l-1})$ must be a free $A_{l-1}$-module. 

By induction hypothesis, since statement (i) holds for $E_{l-1}$, (iii) must hold too. Hence $[\xi_\lambda\bar{\pmb\alpha}_\mu]$, $1\leq \lambda\leq k, 0\leq \mu\leq l-1$ forms a basis of $H^i(X, E_{l-1})$. As a consequence of $\dim H^i(X, E_{l})=\dim H^i(X, E_{l-1})+\dim H^i(X, E)$, the boundary map $\delta:H^i(X, E_{l-1})\to H^{i+1}(X, E)$ associated to the short exact sequence $0\to E\to E_A\to E_{l-1}\to 0$ is zero. According to Lemma \ref{ec}, the boundary map $\delta$ is the cup product with 
$$\sum_{0\leq h\leq l-1}\sum_{1\leq \nu\leq q}a^{h+1}_\nu \psi_\nu \pmb\beta_h.$$
Therefore 
$$0=\delta([\xi_\lambda \bar{\pmb\alpha}_{\mu-1}])=[\sum_{1\leq \nu\leq q}a^{\mu}_\nu \psi_\nu(\xi_\lambda)]$$
for every $1\leq \mu\leq l$. Thus, we have showed that under the cup product map 
$$H^1(X, End(E))\times H^i(X, E)\to H^{i+1}(X, E)$$
the image of $([\sum_{1\leq \nu\leq q}a_\nu^{\mu}\psi_\nu], [\xi_\lambda])$ is zero for any $1\leq \mu\leq l$ and $1\leq \lambda\leq k$. Hence the image of $D(E_A)\times H^i(X, E)$ is zero, that is the first half of statement (ii).

For the other half, we consider the composition 
$$H^{i-1}(X, E)\to H^{i-1}(X, E_j)\to H^i(X, E)$$
 as in Lemma \ref{bm}. By counting dimensions as above, the boundary map $\delta: H^{i-1}(X, E)\to H^i(X, E_{j})$ has to be zero. Therefore, the image of $[\zeta]$ under the composition has to be zero, that is, $[\sum_{1\leq \nu\leq q}a^{j+1}_\nu\psi_\nu(\zeta)]=0$ for all $0\leq j\leq l-1$ and $\zeta\in \mathcal{H}^{0, i-1}(E)$. Since $[\sum_{1\leq \nu\leq q}a^{j+1}_\nu\psi_\nu]_{0\leq j\leq l-1}$ span $D(E_A)$ and the cohomology classes of the harmonic forms in $\mathcal{H}^{0, i-1}(E)$ span $H^{i-1}(X, E)$, the image of $D(E_A)\times H^{i-1}(X, E)$ under the cup product map is zero. 

Finally, we show that $\text{(ii)}\Rightarrow \text{(iii)}$. According to first half of statement (ii), the image of $D(E_A)\times H^i(X, E)$ is zero under the cup product map. Hence the cohomology class $[\sum_{1\leq \nu\leq q} a^\mu_\nu \psi_\nu(\xi_\lambda)]=0$, for any $1\leq \mu\ \leq l, 1\leq\lambda\leq k$. By Lemma \ref{hf}, $\psi_\nu(\xi_\lambda)$ are harmonic, hence so are there linear combinations. Thus, $\sum_{1\leq \nu\leq q} a^\mu_\nu \psi_\nu(\xi_\lambda)=0$ for every $1\leq \mu\leq l$. According to Lemma \ref{cs}, 
$$\bar\partial_A(\xi_\lambda \pmb\alpha_\mu)=\bar\partial(\xi_\lambda)+\sum_{\substack{1\leq \mu'\leq l\\ 1\leq \nu\leq q}}a^{\mu'}_\nu\psi_\nu(\xi_\lambda)\alpha_{\mu'}\pmb\alpha_\mu.$$
On the right side of the equation, $\bar\partial(\xi_\lambda)=0$, because $\xi_\lambda$ is harmonic. And the second term is also zero, because we have just showed $\sum_{1\leq \nu\leq q} a^\mu_\nu \psi_\nu(\xi_\lambda)=0$ for every $1\leq \mu\leq l$. Therefore, $\xi_\lambda \pmb\alpha_\mu$ is $\bar\partial_A$-closed for every $1\leq \mu\leq l$. Since the image of $D(X_A)\times H^{i-1}(X, E)$ under the cup product map is zero, the same argument shows that $\zeta\pmb\alpha_\mu$ is $\bar\partial_A$-closed for any harmonic form $\zeta\in \mathcal{H}^{0, i-1}(E)$. 

We have assumed (ii) is true for $E_A$. Obviously, $D(E_{l-1})\subset D(E_A)$, and hence (ii) is true for $E_{l-1}$. Hence the induction hypothesis implies that $[\xi_\lambda \bar{\pmb\alpha}_\mu]_{1\leq \lambda\leq k, 0\leq \mu\leq l-1}$ form a basis of $H^i(X, E_{l-1})$, and $[\zeta\bar{\pmb\alpha}_\mu]_{0\leq \mu\leq l-1, \zeta\in \mathcal{H}^{0, i-1}(E)}$ span $H^{i-1}(X, E_{l-1})$. We have showed in the previous paragraph that 
$\sum_{1\leq \nu\leq q}a_\nu^\mu\psi_\nu(\xi_\lambda)=0$ for every $1\leq \mu\leq l$, and with the same argument we can show $\sum_{1\leq \nu\leq q }a_\nu^\mu\psi_\nu(\zeta)=0$ for any $\zeta\in \mathcal{H}^{0, i-1}(E)$. Therefore, by Lemma \ref{ec}, the boundary maps $\delta^{i-1}: H^{i-1}(X, E_{l-1})\to H^i(X, E)$ and $\delta^i: H^i(X, E_{l-1})\to H^{i+1}(X, E)$ are both zero maps. Thus, the map $H^{i-1}(X, E_{A})\to H^{i-1}(X, E_{l-1})$ is surjective, and part of the long exact sequence is now a short exact sequence $0\to H^i(X, E)\to H^i(X, E_A)\to H^i(X, E_{l-1})\to 0$. 

Since $[\xi_\lambda\bar{\pmb\alpha}_\mu]\in H^i(W, E_{l-1})$ is the image of $[\xi_\lambda\pmb\alpha_\mu]\in H^i(X, E_A)$ under the map $H^i(X, E_A)\to H^i(X, E_{l-1})$ and the image of $[\xi_\lambda]\in H^i(X, E)$ under $H^i(X, E)\to H^i(X, E_A)$ is $[\xi_\lambda\pmb\alpha_l]$, and since $0\to H^i(X, E)\to H^i(X, E_A)\to H^i(X, E_{l-1})\to 0$ is exact, $[\xi_\lambda\pmb\alpha_\mu]_{1\leq \lambda\leq k, 0\leq \mu\leq l}$ form a basis of $H^i(X, E_A)$. Similarly, since the image of $[\zeta\pmb\alpha_\mu]$ under the map $H^{i-1}(X, E_A)\to H^{i-1}(X, E_{l-1})$ is $[\zeta\bar{\pmb\alpha}_\mu]$ and the image of $[\zeta]$ under the map $H^{i-1}(X, E)\to H^{i-1}(X, E_A)$ is $[\zeta\pmb\alpha_l]$, and since $H^{i-1}(X, E)\to H^{i-1}(X, E_A)\to H^{i-1}(X, E_{l-1})\to 0$ is exact, $[\zeta\pmb\alpha_\mu]_{0\leq \mu\leq l, \zeta\in \mathcal{H}^{0, i-1}(E)}$ span $H^{i-1}(X, E_A)$. We finished the proof of $\text{(ii)}\Rightarrow \text{(iii)}$.
\end{proof}

Now, we are ready to prove Theorem \ref{linear}. Let $H\subset H^1(X, End(E))$ be the intersection of the two annihilators defined in Proposition \ref{main} (ii). 

\begin{proof}[Proof of Theorem \ref{linear}]
Let $A$ be an Artinian local ring. According to Corollary \ref{ea}, for any map $s:Spec(A)\to U$ whose set-theoretic image is at $P$, the schematic image of $Spec(A)$ lies in $\Sigma^i_k(X)$ if and only if $H^i(X, E_A)$ is a free $A$-module of rank $k$. And by $\text{(i)}\Leftrightarrow \text{(ii)}$ in Proposition \ref{main}, $H^i(X, E_A)$ is a free $A$-module of rank $k$ if and only if the space of derivatives $D(E_A)$ lies in $H$. Let $s': Spec(A)\to U'$ be the composition of $s: Spec(A)\to U$ and the isomorphism between $U$ and $U'$. Then the schematic image of $s$ lies in $\Sigma^i_k(X)$ if and only if the schematic image of $s'$ lies in $H\cap U'$. 

Therefore, $\Sigma^i_k(X)$ at $P$ and $H\cap U'$ at origin represent the same functor from the category of Artinian local rings to the category of sets. Denote the origin in $U'$ by $O$. Thus, the completion ring $\widehat{(\mathcal{O}_{\Sigma^i_k})_P}$ is isomorphic to $\widehat{(\mathcal{O}_{H\cap U'})_O}$, and the isomorphism is induced by the isomorphism between $\mathcal{O}_U$ and $\mathcal{O}_{U'}$. Therefore, the isomorphism between $U$ and $U'$ induces an isomorphism between the analytic germs $(\Sigma^i_k)_P$ and $(H\cap U')_O$, hence by possibly shrinking $U$ and $U'$, $\Sigma^i_k(X)\cap U$ is isomorphic to $H\cap U'$ under the isomorphism between $U$ and $U'$ induced by the exponential map.

\end{proof}


\begin{thebibliography}{9}
\bibitem[ACGH]{acgh}E. Arbarello, M. Cornalba, P. Griffiths, J. Harris, \textsl{Geometry of algebraic curves. Vol. I.} Grundlehren der Mathematischen Wissenschaften 267. Springer-Verlag, New York, 1985.
\bibitem[A]{a}D. Arapura, \textsl{Geometry of cohomology support loci for local systems. I.} J. Algebraic Geom. 6 (1997), no. 3, 563-597.
\bibitem[B]{b}N. Budur, \textsl{Unitary local systems, multiplier ideals, and polynomial periodicity of Hodge numbers.} Adv. Math. 221 (2009), no. 1, 217-250. 
\bibitem[DPS]{dps}A. Dimca, S. Papadima, A. Suciu, \textsl{Topology and geometry of cohomology jump loci.} Duke Math. J. 148 (2009), no. 3, 405-457.
\bibitem[DP]{dp}A. Dimca, S. Papadima, \textsl{Nonabelian cohomology jump loci from an analytic viewpoint.} arXiv:1206.3773.
\bibitem[GL1]{gl1}M. Green, R. Lazarsfeld, \textsl{Deformation theory, generic vanishing theorems, and some conjectures of Enriques, Catanese and Beauville.} Invent. Math. 90 (1987), no. 2, 389-407.
\bibitem[GL2]{gl2}M. Green, R. Lazarsfeld, \textsl{Higher obstructions to deforming cohomology groups of line bundles.} J. Amer. Math. Soc. 4 (1991), no. 1, 87-103.
\bibitem[GM]{gm}W. Goldman, J. Millson, \textsl{Deformations of flat bundles over K\"ahler manifolds.} Inst. Hautes \'Etudes Sci. Publ. Math. No. 67 (1988), 43-96. 
\bibitem[Le]{l}R. Levi, \textsl{A new proof of the Grauert direct image theorem.} Proc. Amer. Math. Soc. 99 (1987), no. 3, 535-542.
\bibitem[Li1]{li}A. Libgober, \textsl{First order deformations for rank one local systems with a non-vanishing cohomology.}  Arrangements in Boston: a Conference on Hyperplane Arrangements. 
Topology Appl. 118 (2002), no. 1-2, 159-168. 
\bibitem[Li2]{li0}A. Libgober, \textsl{Non vanishing loci of Hodge numbers of local systems.} Manuscripta Math. 128 (2009), no. 1, 1-31. 
\bibitem[LO]{lo}M. L\"ubke, C. Okonek, \textsl{Moduli spaces of simple bundles and Hermitian-Einstein connections.} Math. Ann. 276 (1987), no. 4, 663-674. 
\bibitem[Ma]{ma}E. Martinengo, \textsl{Local structure of Brill-Noether strata in the moduli space of flat stable bundles.} Rend. Semin. Mat. Univ. Padova 121 (2009), 259-280.
\bibitem[Mu]{m}D. Mumford, \textsl{Abelian varieties.} With appendices by C. P. Ramanujam and Yuri Manin. Corrected reprint of the second (1974) edition. Tata Institute of Fundamental Research Studies in Mathematics, 5. (2008)
\bibitem[Na]{n}A. Nadel, \textsl{Singularities and Kodaira dimension of the moduli space of flat Hermitian-Yang-Mills connections.} Compositio Math. 67 (1988), no. 2, 121-128.
\bibitem[S1]{s1}C. Simpson, \textsl{Higgs bundles and local systems.} Inst. Hautes \'Etudes Sci. Publ. Math. No. 75 (1992), 5-95.
\bibitem[S2]{s2}C. Simpson, \textsl{Subspaces of moduli spaces of rank one local systems.} Ann. Sci. \'Ecole Norm. Sup. (4) 26 (1993), no. 3, 361-401. 
\end{thebibliography}
\end{document}